\tikzset{
  commutative diagrams/.cd,
  arrow style=tikz
  }
\DeclareMathAlphabet{\mathcalligra}{T1}{calligra}{m}{n}
\DeclareMathAlphabet{\mathpzc}{OT1}{pzc}{m}{it}
\newtheorem{theoremABC}{Theorem}
\newtheorem{theorem}{Theorem}[section]
\newtheorem{corollary}[theorem]{Corollary}
\newtheorem{lemma}[theorem]{Lemma}
\newtheorem{proposition}[theorem]{Proposition}
\theoremstyle{definition}
\newtheorem{definition}[theorem]{Definition}
\newtheorem{remark}[theorem]{Remark}
\theoremstyle{remark}
\newcommand{\R}{{\mathbb{R}}}
\renewcommand{\SS}{{\mathbb{S}}}
\newcommand{\Aa}{{\mathcal{A}}}   
\newcommand{\Bb}{{\mathcal{B}}}
\newcommand{\Cc}{{\mathcal{C}}}   
\newcommand{\Ff}{{\mathcal{F}}}
\newcommand{\Ll}{{\mathcal{L}}}   
\newcommand{\Uu}{{\mathcal{U}}}
\newcommand{\Vv}{{\mathcal{V}}}
\newcommand{\cgraph}[1]{\Gamma_{\kern-.5ex{}#1}}     
\newcommand{\norm}{{\rm norm}}
\newcommand{\INNER}[2]{\left\langle #1, #2\right\rangle}
\newcommand{\SC}{{\mathrm{sc}}}                  
\newcommand{\SSC}{{\mathrm{ssc}}}               
\newcommand{\mbf}[1]{\text{\boldmath $#1$}}  
\def\NABLA#1{{\mathop{\nabla\kern-.5ex\lower1ex\hbox{$#1$}}}}
\def\Nabla#1{\nabla\kern-.5ex{}_{#1}}
\def\Tabla#1{\Tilde\nabla\kern-.5ex{}_{#1}}
\def\Babla#1{\widebar\nabla\kern-.5ex{}_{#1}}
\def\abs#1{\mathopen|#1\mathclose|}   
\def\norm#1{\mathopen\|#1\mathclose\|}
\renewcommand{\Tilde}{\widetilde}
\newcommand{\p}{{\partial}}
\newcommand{\INTO}{\hookrightarrow}              
\newlength\eqshift
\renewcommand\theequation{\thesection.\arabic{equation}}
\let\savetheequation\theequation
\renewcommand*\env@matrix[1][\arraystretch]{%
  \edef\arraystretch{#1}%
  \hskip -\arraycolsep
  \let\@ifnextchar\new@ifnextchar
  \array{*\c@MaxMatrixCols c}}
\let\save@mathaccent\mathaccent
\newcommand*\if@single[3]{%
  \setbox0\hbox{${\mathaccent"0362{#1}}^H$}%
  \setbox2\hbox{${\mathaccent"0362{\kern0pt#1}}^H$}%
  \ifdim\ht0=\ht2 #3\else #2\fi
  }
\newcommand*\rel@kern[1]{\kern#1\dimexpr\macc@kerna}
\newcommand*\widebar[1]{\@ifnextchar^{{\wide@bar{#1}{0}}}{\wide@bar{#1}{1}}}
\newcommand*\wide@bar[2]{\if@single{#1}{\wide@bar@{#1}{#2}{1}}{\wide@bar@{#1}{#2}{2}}}
\newcommand*\wide@bar@[3]{%
  \begingroup
  \def\mathaccent##1##2{%
    \let\mathaccent\save@mathaccent
    \if#32 \let\macc@nucleus\first@char \fi
    \setbox\z@\hbox{$\macc@style{\macc@nucleus}_{}$}%
    \setbox\tw@\hbox{$\macc@style{\macc@nucleus}{}_{}$}%
    \dimen@\wd\tw@
    \advance\dimen@-\wd\z@
    \divide\dimen@ 3
    \@tempdima\wd\tw@
    \advance\@tempdima-\scriptspace
    \divide\@tempdima 10
    \advance\dimen@-\@tempdima
    \ifdim\dimen@>\z@ \dimen@0pt\fi
    \rel@kern{0.6}\kern-\dimen@
    \if#31
      \overline{\rel@kern{-0.6}\kern\dimen@\macc@nucleus\rel@kern{0.4}\kern\dimen@}%
      \advance\dimen@0.4\dimexpr\macc@kerna
      \let\final@kern#2%
      \ifdim\dimen@<\z@ \let\final@kern1\fi
      \if\final@kern1 \kern-\dimen@\fi
    \else
      \overline{\rel@kern{-0.6}\kern\dimen@#1}%
    \fi
  }%
  \macc@depth\@ne
  \let\math@bgroup\@empty \let\math@egroup\macc@set@skewchar
  \mathsurround\z@ \frozen@everymath{\mathgroup\macc@group\relax}%
  \macc@set@skewchar\relax
  \let\mathaccentV\macc@nested@a
  \if#31
    \macc@nested@a\relax111{#1}%
  \else
    \def\gobble@till@marker##1\endmarker{}%
    \futurelet\first@char\gobble@till@marker#1\endmarker
    \ifcat\noexpand\first@char A\else
      \def\first@char{}%
    \fi
    \macc@nested@a\relax111{\first@char}%
  \fi
  \endgroup
}
\long\def\symbolfootnote[#1]#2{\begingroup%
\def\thefootnote{\fnsymbol{footnote}}\footnote[#1]{#2}\endgroup}
\tikzset{
  symbol/.style={
    draw=none,
    every to/.append style={
      edge node={node [sloped, allow upside down, auto=false]{$#1$}}}
  }
}
\begin{document}
\sloppy
\author{\quad Urs Frauenfelder \quad \qquad\qquad
             Joa Weber\footnote{
  Email: urs.frauenfelder@math.uni-augsburg.de
  \hfill
  joa@unicamp.br
  }
    \\
    Universit\"at Augsburg \qquad\qquad
    UNICAMP
}

\title{Floerfolds and Floer functions}

\date{\today}

\maketitle 

\begin{abstract}
In this article we introduce the notion of Floer function
which has the property that the Hessian
is a Fredholm operator of index zero 
in a scale of Hilbert spaces.
Since the Hessian has a complicated transformation under chart
transition, in general this is not an intrinsic condition.
Therefore we introduce the concept of Floerfolds for which we show
that the notion of Floer function is intrinsic.
\end{abstract}

\tableofcontents

\newpage
\section{Introduction}

While we nowadays have many examples of Floer homologies
the work of Floer still remains somehow mysterious.
By constructing the celebrated semi-infinite dimensional Morse
homology~\cite{floer:1988c,floer:1989c}
Floer considered a very weak metric to define the gradient.
The Hessian of such a weak metric becomes an unbounded operator.
or, if alternatively one considers a scale of Hilbert spaces,
a Fredholm operator of index zero from the smaller space to the
larger space.

In the example of loop spaces the smaller space is the space of $W^{1,2}$
loops whereas the larger space is the space of $L^2$ loops.
These two spaces together with the dense and compact inclusion
$W^{1,2}\INTO L^2$ build a scale of Hilbert spaces $(H_0,H_1)=(L^2,W^{1,2})$.
This pair can be naturally extrapolated to the triple
$(H_0,H_1,H_2)=(L^2,W^{1,2},W^{2,2})$ and
the Hessian has the regularizing property that if one restricts it
to $H_2$ it becomes as well a Fredholm operator of index zero
from $H_2$ to $H_1$.

Although the concept is already taught in basic calculus
a confusing aspect of the Hessian is its complicated transformation
under coordinate change.
In fact, it is far from obvious that the properties of the Hessian
explained above are intrinsic, i.e. independent of the choice of the chart.
The main purpose of this note is to propose
a general notion of space where the above property of the Hessian
becomes an intrinsic property.
The spaces we construct we refer to as \emph{Floerfolds}
and the functions which admit such a Hessian we refer to as
\emph{Floer functions}.

To define Floerfolds we introduce the notion of Floer map and
Floeromorphism.
Roughly speaking, a Floer map is a two times differentiable map
between level $0$ and level $2$ which as well extends to level $0$,
but also to level $-1$. The requirement that they extend to level $-1$
is probably kind of unexpected. However, since we want that our
Hessian is also a Fredholm operator from level $2$ to level $1$
and under coordinate changes the Hessian transforms with the help of
\emph{the adjoint} of the Jacobian the extension to level $-1$ seems
necessary.
In fact, in the case of the loop space level $-1$ corresponds
to $W^{-1.2}$ functions which have to be interpreted as distributions.
We show that the composition of Floer maps is again a Floer map.
This enables us to define Floerfolds via atlases whose transition
maps are Floeromorphisms.

\smallskip
The main result of this paper is Theorem~\ref{thm:Floer-function}.
In this theorem we show that pulling back a Floer function under a
Floeromorphism is again a Floer function so that the notion of
Floer function becomes an intrinsic concept on a Floerfold.
We state this main result as follows.

\begin{theoremABC}\label{thm:Floer-function}
The notion of Floer function is intrinsic.
\end{theoremABC}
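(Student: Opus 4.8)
The plan is to reduce the assertion to the transformation law of the Hessian under a Floeromorphism and then to read off the Fredholm property and the index from that law. Let $g$ be a Floer function on the target chart and let $\phi$ be a Floeromorphism; we must show that the pullback $f=g\circ\phi$ is again a Floer function. Since the composition of Floer maps is a Floer map, we already know that $f$ is twice differentiable with the correct mapping properties between the levels of the scale, so the only point left to verify is that $\Hess f$ is a Fredholm operator of index zero.

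First I would write out the second order chain rule. At a point one has
\begin{equation}
\Hess f=(D\phi)^*\circ\Hess g\circ D\phi+Dg\circ D^2\phi,
\end{equation}
where the adjoint $(D\phi)^*$ is taken with respect to the $H_0=L^2$ inner product underlying the weak metric, and where $Dg$ and $D^2\phi$ are evaluated at the appropriate points. This formula already exhibits the two features stressed in the introduction: the adjoint of the Jacobian, and a remainder term assembled from the first derivative of $g$ and the second derivative of $\phi$.

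Next I would determine the mapping properties of each factor from the fact that $\phi$ is a Floeromorphism. The Jacobian $D\phi$ is a bounded isomorphism of $H_2$, this being the level $2$ part of a Floer map, and, because a Floer map also extends to level $-1$, it is a bounded isomorphism of $H_{-1}$ as well. Dualising the latter with respect to the $H_0$ pairing, under which $H_1$ and $H_{-1}$ are dual to one another, shows that $(D\phi)^*$ is a bounded isomorphism of $H_1$; this is exactly the role played by the level $-1$ extension, without which the adjoint would not be controlled on $H_1$. Since $g$ is a Floer function, $\Hess g\colon H_2\to H_1$ is Fredholm of index zero. Hence the principal term $(D\phi)^*\circ\Hess g\circ D\phi$, read as $H_2\to H_2\to H_1\to H_1$, is the composite of two isomorphisms with one Fredholm operator of index zero, and is therefore itself Fredholm of index zero by additivity of the index under composition.

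Finally I would treat the remainder $Dg\circ D^2\phi$ as a compact operator $H_2\to H_1$: being of lower order than the Hessian it factors through the compact embedding $H_2\INTO H_1$ of the scale, so that adding it changes neither Fredholmness nor the index. Thus $\Hess f$ is Fredholm of index zero and $f$ is a Floer function; running the same argument with the triple $(H_0,H_1)$ in place of $(H_1,H_2)$ gives the statement one level down, and intrinsicality follows because the chart transitions of a Floerfold are Floeromorphisms. The hard part will be the analytic control of the remainder term, namely showing that $Dg\circ D^2\phi$ is genuinely compact rather than merely bounded from $H_2$ to $H_1$, together with making precise, in the scale-Hilbert setting, that the $H_0$-adjoint of $D\phi$ is bounded and invertible on $H_1$; the remaining steps are bookkeeping with additivity of the Fredholm index and its stability under compact perturbation.
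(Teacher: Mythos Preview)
Your outline matches the paper's approach: the second-order chain rule, the principal term as a composition of isomorphisms with a Fredholm-index-zero operator, the remainder as a compact perturbation, and the role of the level $-1$ extension in making $(D\phi)^*$ bounded on $H_1$ via the duality $H_{-1}^*\simeq H_1$. Two genuine gaps remain.

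First, a Floer function must admit both a Floer Hessian \emph{and} a Floer gradient (Definition~\ref{def:Floer-function}), and you treat only the Hessian. The paper spends a separate step on $\nabla(g\circ\phi)|_q=(D\phi|_q)^*\nabla g|_{\phi(q)}$ and verifies its \texttt{(Restriction)} and \texttt{(Differentiability)} axioms. Your appeal to ``composition of Floer maps is a Floer map'' does not cover this, since $g$ is a Floer \emph{function}, not a Floer map; for the same reason the \texttt{(Restriction)} and \texttt{(Continuity)} axioms for the pulled-back Hessian are not automatic and must be checked directly. Second, your compactness mechanism is not quite the right one. Factoring the remainder through $H_2\hookrightarrow H_1$ would require $K^q$ to extend to a bounded operator $H_1\to H_1$, which the axioms of a Floer map do not provide. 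The paper instead exploits the parameter $s\in[0,1)$ built into the definition of $s$-Floer map: axiom (ii)$_1$ gives $D^2\phi|_q\in\Ll(H_s,H_0;H_0)$, whence $K^q\in\Ll(H_s,H_0)$, and compactness of $K^q\circ\iota_s\colon H_1\to H_0$ comes from the compact inclusion $\iota_s\colon H_1\hookrightarrow H_s$; analogously axiom (ii)$_2$ yields $K^q\in\Ll(H_{1+s},H_1)$ and one uses the compact inclusion $H_2\hookrightarrow H_{1+s}$ at the upper level. This fractional intermediate space is precisely why the parameter $s$ appears in the definition, and it is the honest resolution of the ``hard part'' you flagged.
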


\begin{proof}
Theorem~\ref{thm:pull-back}.
\end{proof}

In the last section we show how the loop space of a manifold $M$
gets endowed with the structure of Floerfold.
We show that the chart transition on the underlying manifold $M$
gives rise to a Floeromorphism between the loops in these two
different charts.

\smallskip
The motivation for having a general notion of Floer homology is the
following.
There are many properties of gradient flow lines
which should hold true in every reasonable Floer theory
like gluing or admitting the structure of a manifold with boundary and
corners under the Morse-Smale condition.
With appearance of new Floer homologies related to
Hamiltonian delay equations these general facts should be
proven in a uniform way and for that we need to figure out what
the actual structure is lying behind Floer homology.
This article makes a contribution to this endeavor.

\medskip\noindent
{\bf Acknowledgements.}
UF~acknowledges support by DFG grant
FR~2637/4-1.

\section{Floeromorphisms}

\subsection{Two- and three-level strong scale differentiability}

\begin{definition}[Two-level $\SSC^2$]\label{def:2L-ssc}
Let $(H_1,H_2)$ be a Hilbert space pair.
Let $U_1$ and $V_1$ be open subsets of $H_1$.
The part of $U_1$ in $H_2$ is open in $H_2$, 
in symbols $U_2:=U_1\cap H_2=\iota^{-1}(U_1)$ where the map
$\iota\colon H_2\to H_1$ is inclusion.
Similarly $V_2:=V_1\cap H_2$ is open in $H_2$.
We say that a map $\phi\colon U_1\to V_1$ is
\textbf{two-level strongly \boldmath$\SC^2$}, or 
\textbf{two-level $\mbf{\SSC^2}$},
if $\phi$ is $C^2$ and the restriction of $\phi$ to $U_2$
takes values in $V_2$ and as a map $\phi_2\colon U_2\to V_2$ is $C^2$.
For a two-level $\SSC^2$ map we write
$$
   \phi\colon (U_1,U_2)\to (V_1,V_2).
$$
\end{definition}

The notion of $\SSC^2$-map is due to Hofer-Wysocki-Zehnder~\cite{Hofer:2021a}.
But differently from us they consider $\SSC^2$ maps on infinitely many levels.

\begin{remark}
Let $\phi\colon U_1\to V_1$ be two-level $\SSC^2$.
Then the two maps
$$
   d^2\phi\colon U_2\times H_1\times H_1\to H_1
   ,\qquad
   U_2\to\Ll(H_1,H_1;H_1)
   ,\quad
   q\mapsto d^2\phi|_q
$$
are continuous since $\phi\in C^2(U_1,V_1)$
and inclusion $U_2\INTO U_1$ is continuous.
\end{remark}

\begin{definition}[Three-level $\SSC^2$]\label{def:3L-ssc}
Let $(H_0,H_1,H_2)$ be a Hilbert space triple.
Let $U_0$ and $V_0$ be open subsets of $H_0$.
Define open subsets $U_1:=U_0\cap H_1$ of $H_1$ and $U_2:=U_0\cap H_2$
of $H_2$; analogously define $V_1$ and $V_2$.
A \textbf{three-level \boldmath$\SSC^2$} map
is a $C^2$ map $\phi\colon U_0\to V_0$ with the property
that its restrictions to $U_1$ and $U_2$, respectively, take values in
$V_1$ and $V_2$, respectively, and as maps
$\phi_1\colon U_1\to V_1$ and $\phi_2\colon U_2\to V_2$ are $C^2$.
For a three-level $\SSC^2$ map we write
$$
   \phi\colon (U_0,U_1,U_2)\to (V_0,V_1,V_2).
$$
\end{definition}

\subsection{Floer maps}

\begin{definition}[Floer map]\label{def:Floer-map}
Let $(H_0,H_1,H_2)$ be a Hilbert space triple.
A two-level $\SSC^2$ map $\phi\colon U_1\to V_1$ between open subsets
of $H_1$
is called \textbf{\boldmath$s$-Floer map} where $s\in[0,1)$,
if it satisfies the following.
\begin{itemize}\setlength\itemsep{0ex} 
\item[(i$)_1$]
  For any $q\in U_1$ there is a continuous linear map on
  $H_0$, notation $D\phi|_q\in\Ll(H_0)$, which extends the derivative
  $d\phi_q$ from $H_1$ to $H_0$, i.e. the diagram
  \begin{equation}\label{eq:i_1}
  \begin{tikzcd} 
     H_0
     \arrow[rr, dashed, "{D\phi|_q}"]
     &&
     H_0
     \\
     H_1
     \arrow[u, hook]
     \arrow[rr, "{d\phi|_q}",  "{{\color{gray}q\in U_{1}}}"']
     &&
     H_1
     \arrow[u, hook]
  \end{tikzcd} 
  \quad,\quad
  D\phi|_q\in\Ll(H_1)\cap\Ll(H_0)
  \end{equation}
  commutes. 
  Furthermore, the map $D\phi$ defined by
  $$
     D\phi\colon U_1\to\Ll(H_0)
     ,\quad
     q\mapsto D\phi|_q
  $$
  is continuously differentiable.
\item[(i$)_2$]
  For any $q\in U_2$ the extension $D\phi|_q\in\Ll(H_0)$
  extends further to $\Ll(H_{-1})$,
  still denoted by $D\phi|_q\in\Ll(H_{-1})$.
  Furthermore, the map $D\phi$ defined by
  $$
     D\phi\colon U_2\to\Ll(H_{-1})
     ,\quad
     q\mapsto D\phi|_q
  $$
  is continuously differentiable.
\item[(ii$)_1$]
  For any $q\in U_1$ there exists a continuous bilinear map, notation
  $D^2\phi|_q\in\Ll(H_s,H_0;H_0)$, which extends 
  $d^2\phi|_q\in\Ll(H_1,H_1;H_1)$, i.e. the diagram
  \begin{equation*}
  \begin{tikzcd} 
     H_s\times H_0\arrow[rr, dashed, "D^2\phi|_q"]
     &&
       H_0
     \\
     H_1\times H_1
     \arrow[u, hook]
     \arrow[rr, "{d^2\phi|_q}",  "{{\color{gray}q\in U_{1}}}"']
     &&
     H_1
     \arrow[u, hook]
  \end{tikzcd} 
  \end{equation*}
  commutes. 
  Furthermore, the map
  $$
     D^2\phi \colon U_1\to\Ll(H_s,H_0;H_0)
     ,\quad
     q\mapsto D^2\phi|_q
  $$ %
  is continuous.
\item[(ii$)_2$]
  For any $q\in U_2$ the continuous bi-linear extension
  $D^2\phi|_q\in\Ll(H_s,H_0;H_0)$ extends,
  upon restriction of the first entry, to a continuous bi-linear map
  $D^2\phi|_q\in\Ll(H_{1+s},H_{-1};H_{-1})$.
  Furthermore, the map
  $$
     D^2\phi \colon U_2\to\Ll(H_{1+s},H_{-1};H_{-1})
     ,\quad
     q\mapsto D^2\phi|_q
  $$
  is continuous.
\end{itemize}
\end{definition}

\begin{remark}[Derivative of $D\phi$]
The derivative of $D\phi$ is related to $D^2\phi$ as follows.
If $q\in U_1$, then it is the restriction of $D^2\phi|_q\colon
H_s\times H_0\to H_0$, namely
$$
   dD\phi|_q=(D^2\phi|_q) |_{H_1\times H_0}\colon H_1\times H_0\to H_0 .
$$
To see this consider $\xi,\eta\in H_1$.
Then $dD\phi|_q(\xi,\eta)
\stackrel{\text{(i$)_1$}}{=}d^2\phi|_q(\xi,\eta)
\stackrel{\text{(ii$)_1$}}{=}D^2\phi|_q(\xi,\eta)$.
Since $H_1$ is dense in $H_0$
the identity $dD\phi|_q(\xi,\eta)=D^2\phi|_q(\xi,\eta)$
extends from $H_1\times H_1$ to $H_1\times H_0$.
\\
Given $q\in U_2$,
applying the same reasoning to $\xi,\eta\in H_2$
and using~(i$)_2$ and~(ii$)_2$ instead, the derivative
of $D\phi\colon U_2\to\Ll(H_{-1})$
is the restriction of $D^2\phi \colon U_2\to\Ll(H_{1+s},H_{-1};H_{-1})$, namely
$$
   dD\phi|_q=(D^2\phi|_q) |_{H_2\times H_{-1}}\colon H_2\times H_{-1}\to H_{-1} .
$$
\end{remark}

\begin{remark}[Definition~\ref{def:Floer-map}~(i$)_2$]
If $q\in U_2$, then we have the following commuting tower of extensions
\begin{equation}\label{eq:i_2-ext}
\begin{tikzcd} 
   H_{-1}
   \arrow[rr, dashed, "{D\phi|_q}", "{{\color{gray}q\in U_{2}}}"']
   &&
   H_{-1}
   \\
   H_0
   \arrow[u, hook]
   \arrow[rr, dashed, "{D\phi|_q}", "{{\color{gray}q\in U_{1}}}"']
   &&
   H_0
   \arrow[u, hook]
   \\
   H_1
   \arrow[u, hook]
   \arrow[rr, dashed, "{d\phi|_q}", "{{\color{gray}q\in U_{1}}}"']
   &&
   H_1
   \arrow[u, hook]
   \\
   H_2
   \arrow[u, hook]
   \arrow[rr, "{d\phi_2|_q}", "{{\color{gray}q\in U_{2}}}"']
   &&
   H_2 .
   \arrow[u, hook]
\end{tikzcd} 
\end{equation}
Since $H_2$ is dense in all three spaces $H_1$, $H_0$, and $H_{-1}$,
all three horizontal maps $D\phi|_q$ in the diagram are uniquely determined
by $d\phi_2|_q$. Furthermore, the map
$$
   D\phi \colon U_2\to
   \Ll(H_2)\cap \Ll(H_1)\cap \Ll(H_0)\cap \Ll(H_{-1})
   ,\quad
   q\mapsto D\phi|_q
$$
is continuous.
\end{remark}

\begin{remark}[Definition~\ref{def:Floer-map}~(ii$)_2$]
\mbox{}
If $q\in U_2$, then we have the commuting diagram of extensions
\begin{equation*}
\begin{tikzcd} 
   H_{1+s}\times H_{-1}
   \arrow[rrr, dashed, "{D^2\phi|_q}", "{{\color{gray}q\in U_{2}}}"']
   &&&
   H_{-1}
   \\
   &H_s\times H_0
   \arrow[rr, dashed, "{D^2\phi|_q}", "{{\color{gray}q\in U_{1}}}"']
   &&
   H_0
   \arrow[u, hook]
   \\
   &H_1\times H_1
   \arrow[u, hook]
   \arrow[rr, dashed, "{d^2\phi|_q}", "{{\color{gray}q\in U_{1}}}"']
   &&
   H_1
   \arrow[u, hook]
   \\
   &H_2\times H_2
   \arrow[uuul, hook]
   \arrow[u, hook]
   \arrow[rr, "{d^2\phi_2|_q}", "{{\color{gray}q\in U_{2}}}"']
   &&
   H_2 .
   \arrow[u, hook]
\end{tikzcd} 
\end{equation*}
Furthermore, $D^2\phi\colon q\mapsto D^2\phi|_q$, is continuous as a map
$$
   U_2
   \to
   \Ll(H_{1+s},H_{-1};H_{-1})\cap \Ll(H_s,H_0;H_0)
   \cap\Ll(H_1,H_1;H_1)\cap\Ll(H_2,H_2;H_2) .
$$
\end{remark}

\begin{remark}\label{rem:extension}
\mbox{}
\begin{itemize}\setlength\itemsep{0ex} 

\item[(a)]
  If $s_1<s_2$, then $s_1$-Floer maps are $s_2$-Floer.

\item[(b)]
Restricting a three-level $\SSC^2$-map $\phi\colon U_0\to V_0$
produces an $s$-Floer map $\phi_1\colon U_1\to V_1$ whenever $s\in[0,1)$.

\item[(c)] For $q\in U_1$ the extension $D:=D\phi|_q\in \Ll(H_0)\cap\Ll(H_1)$
in~(i$)_1$, by the Stein Weiss interpolation theorem
(see e.g.~\cite[5.4.1 p.115]{Bergh:1976a}), lies in $\Ll(H_s)$ and
$$
   \norm{D}_{\Ll(H_s)}
   \le \norm{D}_{\Ll(H_0)}^{1-s} \norm{D}_{\Ll(H_1)}^s
   ,\quad s\in[0,1].
$$
In particular, the diagram~(\ref{eq:i_1})
extends to a commutative diagram
\begin{equation}\label{eq:i_s_s}
\begin{tikzcd} 
   H_0
   \arrow[rr, dashed, "{D\phi|_q}", "{{\color{gray}q\in U_{1}}}"']
   &&
   H_0
\\
   H_s
   \arrow[rr, dashed, "{D\phi|_q|_{H_s}}", "{{\color{gray}q\in U_{1}}}"']
   \arrow[u, hook]
   &&
   H_s
   \arrow[u, hook]
\\
   H_1
   \arrow[u, hook]
   \arrow[rr, "{d\phi|_q}", "{{\color{gray}q\in U_{1}}}"']
   &&
   H_1 .
   \arrow[u, hook]
\end{tikzcd} 
\end{equation}

\item[(d)]
The extension $D:=D\phi|_q\in \Ll(H_0)$ in~(i$)_1$
is continuous as a map
$$
   D\phi\colon U_1\to \left(\Ll(H_0)\cap\Ll(H_1),\norm{\cdot}_{\max}\right),
   \quad
   q\mapsto D\phi|_q .
$$
Indeed $D\phi \colon U_1\to \Ll(H_0)$ is continuous by~(i$)_1$.
Moreover, the restriction of $D\phi|_q$ to $H_1$ equals $d\phi|_q$
which is continuous as a map $U_1\to \Ll(H_1)$ since $\phi\in C^2(U_1,V_1)$.
Furthermore, by the estimate in (c), the restriction
\begin{equation}\label{eq:(d)}
   D\phi|_{H_s}\colon U_1\to\Ll(H_s)
\end{equation}
is continuous for each $s\in[0,1]$.

\item[(e)]
By the same reasoning as in (c) and (d)
the following is true.
If $q\in U_2$, then the commuting diagram~(\ref{eq:i_2-ext})
extends to the following commuting tower of extensions
\begin{equation}\label{eq:i-ext}
\begin{tikzcd} 
   H_{-1}
   \arrow[rr, dashed, "{D\phi|_q}", "{{\color{gray}q\in U_{2}}}"']
   &&
   H_{-1}
\\
   H_0
   \arrow[u, hook]
   \arrow[rr, dashed, "{D\phi|_q}", "{{\color{gray}q\in U_{1}}}"']
   &&
   H_0
   \arrow[u, hook]
\\
   H_1
   \arrow[u, hook]
   \arrow[rr, dashed, "{d\phi|_q}", "{{\color{gray}q\in U_{1}}}"']
   &&
   H_1
   \arrow[u, hook]
\\
   H_{1+s}
   \arrow[u, hook]
   \arrow[rr, dashed, "{d\phi|_q|_{H_{1+s}}}", "{{\color{gray}q\in U_{1}}}"']
   &&
   H_{1+s}
   \arrow[u, hook]
\\
   H_2
   \arrow[u, hook]
   \arrow[rr, "{d\phi_2|_q}", "{{\color{gray}q\in U_{2}}}"']
   &&
   H_2 .
   \arrow[u, hook]
\end{tikzcd} 
\end{equation}
The restriction $d\phi|_q|_{H_{1+s}}\colon U_2\to\Ll(H_{1+s})$
is continuous for each $s\in[0,1]$.

\end{itemize}
\end{remark}

That the composition of Floer maps is again a Floer map
depends on the Stein-Weiss interpolation theorem.

\begin{proposition}[Composition]\label{prop:composition}
Let $(H_0,H_1,H_2)$ be a Hilbert space triple.
Consider $s$-Floer maps $\phi\colon U_1\to V_1$
and $\psi\colon V_1\to W_1$ between open subsets of~$H_1$. 
Then the composition $\psi\circ\phi\colon U_1\to W_1$ is an
$s$-Floer map as well.
\end{proposition}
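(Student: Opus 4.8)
The plan is to define the extensions required of $\psi\circ\phi$ through the first- and second-order chain-rule formulas and then to verify, level by level, that these candidates land in the prescribed operator spaces with the prescribed dependence on the base point. First I would record that $\psi\circ\phi$ is two-level $\SSC^2$: it is $C^2$ on $H_1$ as a composition of $C^2$ maps, and since $\phi$ carries $U_2$ into $V_2$ the restriction $(\psi\circ\phi)_2=\psi_2\circ\phi_2\colon U_2\to W_2$ is again $C^2$. Throughout I abbreviate $p:=\phi(q)$, so that $p\in V_1$ for $q\in U_1$ and $p\in V_2$ for $q\in U_2$; the latter is what lets me invoke the lower-level conditions for $\psi$ at the point $p$.

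For conditions~(i$)_1$ and~(i$)_2$ I would set $D(\psi\circ\phi)|_q:=D\psi|_p\circ D\phi|_q$. Since $D\phi|_q$ and $D\psi|_p$ both lie in $\Ll(H_1)\cap\Ll(H_0)$, so does their composition, and on $H_1$ it restricts to $d\psi|_p\circ d\phi|_q=d(\psi\circ\phi)|_q$, so the diagram~(\ref{eq:i_1}) commutes. The $C^1$-dependence on $q$ follows because $q\mapsto D\phi|_q$ is $C^1$ into $\Ll(H_0)$ by~(i$)_1$, the map $q\mapsto p$ is $C^1$ into $V_1$, hence $q\mapsto D\psi|_p$ is $C^1$ into $\Ll(H_0)$, while operator composition $\Ll(H_0)\times\Ll(H_0)\to\Ll(H_0)$ is continuous bilinear, hence smooth. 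For~(i$)_2$ I would repeat the argument one level lower: for $q\in U_2$ one has $p\in V_2$, so $D\psi|_p\in\Ll(H_{-1})$ and $D\phi|_q\in\Ll(H_{-1})$, their composition lies in $\Ll(H_{-1})$, and the same reasoning yields the $C^1$-dependence of $q\mapsto D(\psi\circ\phi)|_q$ into $\Ll(H_{-1})$.

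For the second-derivative conditions I would use the chain rule for second derivatives and set
$$
   D^2(\psi\circ\phi)|_q(\xi,\eta)
   :=D^2\psi|_p\bigl(D\phi|_q\xi,D\phi|_q\eta\bigr)
   +D\psi|_p\bigl(D^2\phi|_q(\xi,\eta)\bigr).
$$
On $H_1\times H_1$ this reduces to the classical second-order chain rule, so it does extend $d^2(\psi\circ\phi)|_q$. To verify~(ii$)_1$ I must check that this is a continuous bilinear map in $\Ll(H_s,H_0;H_0)$. The second summand is unproblematic: $D^2\phi|_q\in\Ll(H_s,H_0;H_0)$ sends $(\xi,\eta)\in H_s\times H_0$ into $H_0$, and $D\psi|_p\in\Ll(H_0)$ keeps it there. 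The crux is the first summand, because the first slot of $D^2\psi|_p\in\Ll(H_s,H_0;H_0)$ demands an $H_s$-argument; this is exactly where the Stein--Weiss interpolation enters, for by Remark~\ref{rem:extension}(c) the operator $D\phi|_q$ lies in $\Ll(H_s)$, so $D\phi|_q\xi\in H_s$ for $\xi\in H_s$, while $D\phi|_q\eta\in H_0$ for $\eta\in H_0$. Continuity of $q\mapsto D^2(\psi\circ\phi)|_q$ into $\Ll(H_s,H_0;H_0)$ then follows from the continuity of $D^2\phi$ and $D^2\psi$ from~(ii$)_1$, of the restriction $D\phi|_{H_s}$ from Remark~\ref{rem:extension}(d), of $D\phi$ into $\Ll(H_0)$, and of $q\mapsto D\psi|_p$, combined with the multilinearity of the algebraic operations involved.

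Condition~(ii$)_2$ I would verify by reading the same formula one level down: for $q\in U_2$ I take $(\xi,\eta)\in H_{1+s}\times H_{-1}$. The second summand lands in $H_{-1}$ because $D^2\phi|_q\in\Ll(H_{1+s},H_{-1};H_{-1})$ and $D\psi|_p\in\Ll(H_{-1})$, while the first summand again hinges on the interpolation statement in Remark~\ref{rem:extension}(e), namely $D\phi|_q\in\Ll(H_{1+s})$, so that $D\phi|_q\xi\in H_{1+s}$ matches the first slot of $D^2\psi|_p\in\Ll(H_{1+s},H_{-1};H_{-1})$ and $D\phi|_q\eta\in H_{-1}$ matches the second; continuity in $q$ follows as before, now using the continuity of $d\phi|_q|_{H_{1+s}}$ from Remark~\ref{rem:extension}(e). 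The one genuine obstacle is precisely this matching of the asymmetric domains: the bilinear extensions insist that their first argument retain the higher regularity level $H_s$, respectively $H_{1+s}$, and it is only the Stein--Weiss interpolation estimates recorded in Remark~\ref{rem:extension} that guarantee $D\phi|_q$ preserves these intermediate levels. Everything else is bookkeeping with the chain rule together with the joint continuity of operator composition and of evaluation of bilinear maps.
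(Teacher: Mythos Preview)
Your proof is correct and follows essentially the same approach as the paper: the same chain-rule definitions of $D(\psi\circ\phi)$ and $D^2(\psi\circ\phi)$, the same appeal to the bilinearity of operator composition for the $C^1$-dependence in~(i$)_1$ and~(i$)_2$, and the same crucial use of Stein--Weiss interpolation (Remark~\ref{rem:extension}(c)--(e)) to feed the $H_s$- and $H_{1+s}$-slots of $D^2\psi$. The paper writes out the Leibniz rule for $dD(\psi\circ\phi)$ more explicitly, but your condensed version is equivalent.
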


\begin{proof}
The composition $\psi\circ\phi\colon U_1\to V_1\to W_1$
of two two-level $\SSC^2$ maps is a two-level $\SSC^2$ map,
because composing two $C^2$ maps gives a $C^2$ map,
same for the restriction $(\psi\circ\phi)_2=\psi_2\circ\phi_2\colon
U_2\to V_2\to W_2$.

\smallskip
\noindent
(i$)_1$
For $q\in U_1$ the operator defined by
$D(\psi\circ\phi)|_q:=D\psi|_{\phi(q)}\circ D\phi|_q$ extends the operator
$d(\psi\circ\phi)|_q=d\psi|_{\phi(q)}\circ d\phi|_q\in\Ll(H_1)$ to
$\Ll(H_0)$.
\\
Moreover, the map $D(\psi\circ\phi)\colon U_1\to\Ll(H_0)$,
$q\mapsto D\psi|_{\phi(q)}\circ D\phi|_q$, is continuous as both factors are.
It remains to show that the map is continuously differentiable.
For that purpose we consider the map as a composition of two maps

For the derivative the following version of the \textbf{Leibniz rule} holds
\begin{equation}\label{eq:Leibniz}
   dD(\psi\circ\phi)|_q (\xi,\eta)
   =dD\psi|_{\phi(q)} (d\phi|_q\xi, D\phi|_q\eta)
   +D\psi|_{\phi(q)}\circ dD\phi|_q(\xi,\eta)
\end{equation}
for $\xi\in H_1$ and $\eta\in H_0$.
This Leibniz rule can be deduced from the chain rule as follows.
We consider the composition
\begin{equation*}
\begin{tikzcd} [row sep=tiny] 
   U_1
   \arrow[r, "{\Ff}"]
   &
   \Ll(H_0)\times\Ll(H_0)
   \arrow[r, "{V}", "\text{bi-lin.}"']
   &
   \Ll(H_0)
   \\
   q
   \arrow[r, mapsto]
   &
   \bigl(\underbrace{D\psi|_{\phi(q)}}_{=:S},\underbrace{D\phi|_q}_{=:T}\bigr)
   \arrow[r, mapsto]
   &
   D\psi|_{\phi(q)}\circ dD\phi|_q
\end{tikzcd} 
\end{equation*}
where $V(S,T)=S\circ T$.
The derivative of $V$ is given by
$$
    dV|_{(S,T)}(\hat S,\hat T)=\hat S\circ T+S\circ\hat T
$$
and the derivative of $\Ff$ is given by
$$
   d\Ff|_q\colon H_1\to \Ll(H_0)\times\Ll(H_0)
   ,\quad
   \xi\mapsto
   \bigl(\underbrace{dD\psi|_{\phi(q)}(d\phi|_q\xi,\cdot)}_{\hat S},
   \underbrace{dD\phi|_q(\xi,\cdot)}_{\hat T}\bigr) .
$$
Thus by the chain rule the derivative exists and is of the form
\begin{equation*}
\begin{split}
   d(V\circ\Ff)|_q\xi
   &=dV|_{\Ff(q)}\circ d\Ff|_q\xi\\
   &=dD\psi|_{\phi(q)}(d\phi|_q\xi,D\phi|_q\cdot)
   +D\psi|_{\phi(q)}\circ dD\phi|_q(\xi,\cdot)
\end{split}
\end{equation*}
for any $\xi\in H_1$.
Continuity of this map in $q$ holds by axiom (i$)_1$ for $\phi$ and
for $\psi$.

\smallskip
\noindent
(i$)_2$
Same argument as in~(i$)_1$.
For $q\in U_2$ define
$D(\psi\circ\phi)|_q:=D\psi|_{\phi(q)}\circ D\phi|_q$
using the extensions to $\Ll(H_{-1})$ from (i$)_2$.

\smallskip
\noindent
(ii$)_1$
Pick $q\in U_1$, then for $\xi,\eta\in H_1$ we obtain
\begin{equation*}
\begin{split}
   &d^2(\psi\circ\phi)|_q(\xi,\eta)\\
   &=d^2\psi|_{\phi(q)}\left(d\phi|_q\xi,d\psi|_q\eta\right)
   +d\psi|_{\phi(q)} \circ d^2\phi|_q(\xi,\eta)\\
   &\stackrel{2}{=}\underbrace{D^2\psi|_{\phi(q)}}_{H_s\times H_0\to H_0}
   \Bigl(\overbrace{D\phi|_q}^{H_s\to H_s}\xi,
   \overbrace{D\psi|_q}^{H_0\to H_0}\eta\Bigr)
   +\underbrace{D\psi|_{\phi(q)}}_{H_0\to H_0}\circ
   \overbrace{D^2\phi|_q}^{H_s\times H_0\to H_0}(\xi,\eta)\\
   &=: D^2(\psi\circ\phi)|_q(\xi,\eta)
\end{split}
\end{equation*}
As indicated in the formula, equality 2 makes sense for $\xi\in H_s$
and $\eta\in H_0$. Here item~(c) of Remark~\ref{rem:extension} enters.
Inspection term by term shows that the map
$D^2(\psi\circ\phi)\colon U_1\to \Ll(H_s,H_0;H_0)$
is composed of continuous maps due to the axioms for $\phi$ and $\psi$
and, in addition, the map in~(\ref{eq:(d)}).

\smallskip
\noindent
(ii$)_2$
Pick $q\in U_2$, then for $\xi,\eta\in H_2$ we obtain
\begin{equation*}
\begin{split}
   &d^2(\psi_2\circ\phi_2)|_q(\xi,\eta)\\
   &=d^2\psi_2|_{\phi(q)}\left(d\phi_2|_q\xi,d\psi_2|_q\eta\right)
   +d\psi_2|_{\phi_2(q)} \circ d^2\phi_2|_q(\xi,\eta)\\
   &\stackrel{2}{=}\underbrace{D^2\psi|_{\phi_2(q)}}_{H_{1+s}\times H_{-1}\to H_{-1}}
   \Bigl(\overbrace{d\phi|_q}^{\Ll(H_{1+s})}\xi,
   \overbrace{D\psi|_q}^{\Ll(H_{-1})}\eta\Bigr)
   +\underbrace{D\psi|_{\phi_2(q)}}_{\Ll(H_{-1})}\circ
   \overbrace{D^2\phi|_q}^{H_{1+s}\times H_{-1}\to H_{-1}}(\xi,\eta)\\
   &=: D^2(\psi\circ\phi)|_q(\xi,\eta) .
\end{split}
\end{equation*}
As indicated in the formula, equality 2 makes sense for $\xi\in H_{1+s}$
and $\eta\in H_{-1}$.
Continuity of the map
$$
   D^2(\psi\circ\phi) \colon U_2\to\Ll(H_{1+s},H_{-1};H_{-1})
   ,\quad
   q\mapsto D^2(\psi\circ\phi)|_q
$$
follows as above by using 
Remark~\ref{rem:extension}~(e).

This proves Proposition~\ref{prop:composition}.
\end{proof}

\subsection{Floeromorphisms}

\begin{definition}[Floeromorphism]\label{def:Floeromorphism}
Let $(H_0,H_1,H_2)$ be a Hilbert space triple.
An \textbf{\boldmath$s$-Floeromorphism}
is a bijective $s$-Floer map whose inverse is an $s$-Floer map, too.
\\
By $\mathrm{Floer}_s(U_1,V_1)$ 
we denote the \textbf{set of \boldmath$s$-Floeromorphisms} from
$U_1$ to $V_1$.
\end{definition}

\begin{lemma}[Local implies global]\label{le:loc-glob}
Let $(H_0,H_1,H_2)$ be a Hilbert space triple and $s\in[0,1)$.
Let $\phi\colon U\to V$ be a homeomorphism
between open subsets of $H_1$. Assume that we have open covers
$\cup_{\beta\in A}U_\beta=U$ and $\cup_{\beta\in B}V_\beta=V$
such that for every $\beta\in B$ the map $\phi$ restricts to an
$s$-Floeromorphism $\phi|_{U_\beta}\colon U_\beta \to V_\beta$.
Then $\phi$ itself is an $s$-Floeromorphism.
\end{lemma}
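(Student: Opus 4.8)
The plan is to reduce everything to the observation that \emph{each} defining condition of an $s$-Floer map in Definition~\ref{def:Floer-map} is a local property, so it can be checked chart by chart on the given cover and then assembled on all of $U$. Since $\phi$ is already assumed to be a homeomorphism, bijectivity is automatic, and by Definition~\ref{def:Floeromorphism} it suffices to prove that $\phi$ and its inverse $\phi^{-1}$ are each $s$-Floer maps. Applying the locality observation once to $\phi$ and once to $\phi^{-1}$ will then finish the proof.

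First I would do the set-theoretic bookkeeping needed to see that the two-level $\SSC^2$ structure of Definition~\ref{def:2L-ssc} is inherited. Writing $U_2=U\cap H_2$ and $(U_\beta)_2=U_\beta\cap H_2$, the identity $\bigcup_\beta(U_\beta\cap H_2)=\bigl(\bigcup_\beta U_\beta\bigr)\cap H_2=U_2$ shows that the parts in $H_2$ again cover $U_2$, and each $U_\beta\cap H_2=\iota^{-1}(U_\beta)$ is open in $H_2$ since $\iota\colon H_2\INTO H_1$ is continuous. Hence for $q\in U_2$ one may pick $\beta$ with $q\in U_\beta$, so that $q\in U_\beta\cap H_2$ and $\phi(q)$ lands in $V_\beta\cap H_2\subseteq V_2$; this shows $\phi$ restricts to $\phi_2\colon U_2\to V_2$. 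Because being $C^2$ is a local property, both $\phi$ on $U_1$ and $\phi_2$ on $U_2$ are $C^2$, so $\phi$ is two-level $\SSC^2$.

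Next I would verify the four axioms (i$)_1$, (i$)_2$, (ii$)_1$, (ii$)_2$. The point that makes the chartwise data glue consistently is \emph{uniqueness}: every extension occurring in the axioms is uniquely determined by the underlying densely defined map, since the smaller space is dense in the larger one. Concretely, $D\phi|_q\in\Ll(H_0)$ is the unique bounded extension of $d\phi|_q$ because $H_1$ is dense in $H_0$; its further extension to $\Ll(H_{-1})$ is unique because $H_2$ is dense in $H_{-1}$; and the bilinear extensions $D^2\phi|_q\in\Ll(H_s,H_0;H_0)$ and $\in\Ll(H_{1+s},H_{-1};H_{-1})$ are unique because $H_1\times H_1$ is dense in $H_s\times H_0$ and $H_2\times H_2$ is dense in $H_{1+s}\times H_{-1}$. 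Therefore the operators assigned to a point $q\in U_\beta\cap U_{\beta'}$ by two charts coincide, and $D\phi$, $D^2\phi$ are globally well defined. The pointwise existence statements then hold at every $q\in U$ because $q$ lies in some $U_\beta$ where they are assumed; the regularity statements — that $D\phi\colon U_1\to\Ll(H_0)$ and $D\phi\colon U_2\to\Ll(H_{-1})$ are continuously differentiable and that $D^2\phi$ is continuous into the relevant operator spaces — are again local, since a map is $C^1$, resp. continuous, exactly when it is so on each member of an open cover. Thus all four axioms pass from the cover to $U$, and $\phi$ is an $s$-Floer map.

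Finally, since each restriction $\phi|_{U_\beta}$ is an $s$-Floeromorphism, its inverse $\phi^{-1}|_{V_\beta}=(\phi|_{U_\beta})^{-1}$ is an $s$-Floer map, and the images $V_\beta=\phi(U_\beta)$ form an open cover of $V=\phi(U)$. Running the identical locality argument for $\phi^{-1}$ relative to $\{V_\beta\}$ shows $\phi^{-1}$ is an $s$-Floer map as well, so $\phi$ is an $s$-Floeromorphism. I expect no genuinely hard step here: the entire content is that the axioms are local, and the only point requiring care is the consistency of the extensions on chart overlaps, which is dispatched by the density/uniqueness remark rather than by any estimate.
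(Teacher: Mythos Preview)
Your proposal is correct and is precisely a fleshed-out version of the paper's one-line proof, which reads in full: ``This follows since derivatives are local.'' Your extra care about uniqueness of the extensions on overlaps (via density) is the right way to make that sentence rigorous, but the underlying approach is identical.
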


\begin{proof}
This follows since derivatives are local.
\end{proof}

\section{Floerfolds}

\begin{definition}[Floer-atlas]
Let $X$ be a topological space and $s\in[0,1)$.
An \textbf{\boldmath$s$-Floer atlas} for $X$
is a collection $\Aa=\{\rho_\alpha\}_{\alpha\in A}$
of homeomorphisms $\rho_\alpha\colon X\supset V_\alpha\to
U_\alpha\subset H_1$ between open sets such that the following
conditions hold.
\begin{itemize}\setlength\itemsep{0ex} 
\item[(i)]
  $\cup_{\alpha\in A}V_\alpha= X$.
\item[(ii)]
  For any $\alpha,\beta\in A$ the map defined between open subsets
  of $H_1$ by
  $$
     \phi_{\alpha\beta}
     :=\rho_\beta\circ \rho_\alpha^{-1}|_{\rho_\alpha(V_\alpha\cap V_\beta)}
     \colon \rho_\alpha(V_\alpha\cap V_\beta)
     \to \rho_\beta(V_\alpha\cap V_\beta)
  $$
  is an $s$-Floeromorphism, called an
  \textbf{\boldmath$s$-Floer transition map}.
\end{itemize}
Two $s$-Floer atlases $\Aa=\{\rho_\alpha\}_{\alpha\in A}$ and
$\Bb=\{\rho_\beta\}_{\beta\in B}$ for $X$ are called \textbf{compatible},
notation $\Aa\sim\Bb$,
if for all $\alpha\in A$ and $\beta\in B$ the map
$\phi_{\alpha\beta}$ is an $s$-Floer transition map.
\end{definition}

\begin{theorem}\label{thm:equivalence-relation}
Compatibility is an equivalence relation for Floer atlases.
\end{theorem}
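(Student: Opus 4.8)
The plan is to verify the three defining properties of an equivalence relation—reflexivity, symmetry, and transitivity—for the compatibility relation $\sim$ on the set of $s$-Floer atlases for $X$. Throughout I would exploit the fact, established in Proposition~\ref{prop:composition}, that the composition of $s$-Floer maps is again an $s$-Floer map, together with Definition~\ref{def:Floeromorphism} which tells us that an $s$-Floeromorphism is a bijective $s$-Floer map with $s$-Floer inverse. The key observation making all three properties work is that each transition map $\phi_{\alpha\beta}$ is, by definition, an $s$-Floeromorphism between open subsets of $H_1$, and these assemble well under composition and inversion.

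For \textbf{reflexivity}, I would show $\Aa\sim\Aa$ for any single $s$-Floer atlas $\Aa=\{\rho_\alpha\}_{\alpha\in A}$. This is almost immediate from the atlas axioms themselves: condition~(ii) in the definition of an $s$-Floer atlas already requires that for any $\alpha,\beta\in A$ the map $\phi_{\alpha\beta}=\rho_\beta\circ\rho_\alpha^{-1}$ is an $s$-Floer transition map, which is exactly the condition defining $\Aa\sim\Aa$. For \textbf{symmetry}, suppose $\Aa\sim\Bb$ with $\Aa=\{\rho_\alpha\}_{\alpha\in A}$ and $\Bb=\{\rho_\beta\}_{\beta\in B}$. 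I must show $\Bb\sim\Aa$, i.e. that $\phi_{\beta\alpha}=\rho_\alpha\circ\rho_\beta^{-1}$ is an $s$-Floeromorphism for all $\beta\in B$, $\alpha\in A$. But $\phi_{\beta\alpha}=\phi_{\alpha\beta}^{-1}$, and since $\phi_{\alpha\beta}$ is an $s$-Floeromorphism by hypothesis, its inverse is an $s$-Floeromorphism by the very definition (an $s$-Floeromorphism is a bijective $s$-Floer map whose inverse is also an $s$-Floer map, so the class of $s$-Floeromorphisms is closed under taking inverses).

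For \textbf{transitivity}, suppose $\Aa\sim\Bb$ and $\Bb\sim\Cc$, where $\Cc=\{\rho_\gamma\}_{\gamma\in C}$. I would show $\Aa\sim\Cc$, i.e. that for all $\alpha\in A$ and $\gamma\in C$ the transition $\phi_{\alpha\gamma}=\rho_\gamma\circ\rho_\alpha^{-1}$ is an $s$-Floeromorphism. The natural idea is to write $\phi_{\alpha\gamma}=\phi_{\beta\gamma}\circ\phi_{\alpha\beta}$ on suitable overlaps, where each factor is an $s$-Floeromorphism by hypothesis, and then invoke Proposition~\ref{prop:composition} to conclude the composite is an $s$-Floer map (and likewise for the inverse, using symmetry and the composition result again). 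The main subtlety—and the step I expect to require the most care—is domain-matching: the map $\phi_{\alpha\gamma}$ is defined on $\rho_\alpha(V_\alpha\cap V_\gamma)$, whereas the composition $\phi_{\beta\gamma}\circ\phi_{\alpha\beta}$ is naturally defined only on the smaller set $\rho_\alpha(V_\alpha\cap V_\beta\cap V_\gamma)$. The identity $\phi_{\alpha\gamma}=\phi_{\beta\gamma}\circ\phi_{\alpha\beta}$ therefore holds only over the triple overlap, so a priori one learns that $\phi_{\alpha\gamma}$ is an $s$-Floeromorphism only on the part of its domain lying over $V_\beta$.

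To overcome this I would appeal to Lemma~\ref{le:loc-glob} (Local implies global): since $\Bb$ covers $X$, the triple overlaps $\{V_\alpha\cap V_\beta\cap V_\gamma\}_{\beta\in B}$ cover $V_\alpha\cap V_\gamma$, so the sets $\rho_\alpha(V_\alpha\cap V_\beta\cap V_\gamma)$ form an open cover of $\rho_\alpha(V_\alpha\cap V_\gamma)$ on which $\phi_{\alpha\gamma}$ restricts to an $s$-Floeromorphism; the lemma then upgrades this local statement to the assertion that $\phi_{\alpha\gamma}$ is an $s$-Floeromorphism on its full domain. I would carry this out symmetrically for the target cover as well so that both hypotheses of Lemma~\ref{le:loc-glob} are met, and note that $\phi_{\alpha\gamma}$ is visibly a homeomorphism as a composite of homeomorphisms. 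This completes transitivity, and hence the proof that $\sim$ is an equivalence relation.
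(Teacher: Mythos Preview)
Your proposal is correct and follows essentially the same approach as the paper: reflexivity by definition, symmetry because inverses of Floeromorphisms are Floeromorphisms, and transitivity by writing $\phi_{\alpha\gamma}=\phi_{\beta\gamma}\circ\phi_{\alpha\beta}$ on triple overlaps, invoking Proposition~\ref{prop:composition}, and then applying Lemma~\ref{le:loc-glob} via the cover $\cup_\beta V_\beta=X$. Your discussion of the domain-matching subtlety and the hypotheses of Lemma~\ref{le:loc-glob} is in fact slightly more explicit than the paper's own argument.
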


\begin{proof}
\textit{Reflexivity.} Holds by definition.
\textit{Symmetry.}
Assume that $\Aa$ is compatible with $\Bb$.
Since the inverse of a Floeromorphism is a Floeromorphism as well,
it follows that $\Bb$ is compatible with $\Aa$.
\\
\textit{Transitivity.}
Consider three $s$-Floer atlases
$\Aa=\{\rho_\alpha\}_{\alpha\in A}$, $\Bb=\{\rho_\beta\}_{\beta\in B}$,
and $\Cc=\{\rho_\gamma\}_{\gamma\in C}$ such that $\Aa$ is compatible
with $\Bb$ and $\Bb$ is compatible with $\Cc$.
We have to show that $\Aa$ is compatible with $\Cc$.
To see this let $\alpha\in A$ and $\gamma \in C$.
We need to show that
  $$
     \phi_{\alpha\gamma}
     :=\rho_\gamma\circ \rho_\alpha^{-1}|_{\rho_\alpha(V_\alpha\cap V_\gamma)}
     \colon \rho_\alpha(V_\alpha\cap V_\gamma)
     \to \rho_\gamma(V_\alpha\cap V_\gamma)
  $$
is a Floeromorphism.
For any $\beta\in B$ we have that
$
   \phi_{\alpha\gamma}|_{\rho_\alpha(V_\alpha\cap V_\beta\cap V_\gamma)}
   =\phi_{\beta\gamma}\circ\phi_{\alpha\beta}
   |_{\rho_\alpha(V_\alpha\cap V_\beta\cap V_\gamma)}
$ as a map
$
   \rho_\alpha(V_\alpha\cap V_\beta\cap V_\gamma)
   \to
   \rho_\gamma(V_\alpha\cap V_\beta\cap V_\gamma)
$
is an $s$-Floeromorphism by Proposition~\ref{prop:composition}
by compatibility $\Aa\sim \Bb$ and $\Bb\sim \Cc$.
Hence since $\cup_\beta V_\beta=X$ it follows from
Lemma~\ref{le:loc-glob} that $\phi_{\alpha\gamma}$ is an
$s$-Floeromorphism and hence $\Aa\sim \Cc$.
This proves Theorem~\ref{thm:equivalence-relation}.
\end{proof}

\begin{definition}
An \textbf{\boldmath$s$-Floerfold} is a topological space $X$
together with an equivalence class of $s$-Floer atlases.
\end{definition}

Assume that $\Aa_i$ for $i\in A$ is an arbitrary collection
of compatible $s$-Floer atlases. Then, by definition of compatibility,
the union $\cup_{i\in I}\Aa_i$ is itself an $s$-Floer atlas
which is compatible with each $\Aa_j$ for every $j\in I$.
In particular, if $\Aa$ is an $s$-Floer atlas, then the union
$\widebar\Aa:=\cup_{\Bb\sim \Aa} \Bb$ is also an $s$-Floer atlas
which is compatible with $\Aa$ and which is maximal in the sense
that, if $\Bb$ is any $s$-Floer atlas compatible with $\Aa$, then
$\Bb\subset\widebar\Aa$.
In particular, any equivalence class of $s$-Floer atlases has a
maximal representative, which by definition of maximality is unique.
Therefore, alternatively, we can define an 
\textbf{\boldmath$s$-Floerfold} as well as a
topological space endowed with a maximal $s$-Floer atlas.

\section{Floer functions}

We first define a Floer function on an open subset of $H_1$.

\begin{definition}[Floer gradient]
\label{def:Floer-gradient}
Let $H_0\supset H_1\supset H_2$ be a Hilbert space triple.
Let $f\colon H_1\supset U_1\to\R$ be a $C^2$ function
defined on an open subset $U_1$ of $H_1$.
The part of $U_1$ in $H_2$, notation 
$U_2:=U_1\cap H_2$, is an open subset of $H_2$.

\smallskip
Under these conditions a \textbf{Floer gradient} is a map
$\Nabla{} f\colon U_1\to H_0$ satisfying the following conditions.
\begin{labeling}{\texttt{(Differentiability)}}
\item[\texttt{($H_0$-gradient)}]
  If $q\in U_1$ and $\xi\in H_1$, then it holds that
  \begin{equation}\label{eq:Floer-gradient}
     df|_q\xi
     =\INNER{\Nabla{} f|_q}{\xi}_0 .
  \end{equation}

\item[\texttt{(Restriction)}]
  The restriction of $\Nabla{} f$ to $U_2$ takes values in $H_1$,
  notation $(\Nabla{}f)_2\colon U_2\to H_1$.

\item[\texttt{(Differentiability)}]
  Both maps
  \begin{equation*}
  \begin{split}
     U_1\to H_0,\quad q
     &\mapsto \Nabla{} f|_q
  \\
     U_2\to H_1,\quad q
     &\mapsto (\Nabla{} f)_2|_q
  \end{split}
  \end{equation*}
  are continuously differentiable (i.e. of class $C^1$).
\end{labeling}
\end{definition}

\begin{definition}[Floer Hessian]
\label{def:Floer-Hessian}
Let $H_0\supset H_1\supset H_2$ be a Hilbert space triple.
Let $f\colon H_1\supset U_1\to\R$ be a $C^2$ function
defined on an open subset $U_1$ of $H_1$.
The intersection $U_2:=U_1\cap H_2$ is an open subset of $H_2$.

\smallskip
Under these conditions a \textbf{Floer Hessian} is a map
$$
   A=A(f)\colon U_1\times H_1\to H_0,\quad
   (q,\xi)\mapsto A(q,\xi)=:A^q \xi
$$
such that $A^q\in\Ll(H_1,H_0)$ and which satisfies the following properties.
\begin{labeling}{\texttt{(Restriction)}}
\item[\texttt{($H_0$-Hessian)}]
  If $q\in U_1$ and $\xi,\eta\in H_1$, then it holds that
  \begin{equation}\label{eq:Floer-Hessian}
     d^2f|_q(\xi,\eta) :=d^2f(q)(\xi,\eta)=\INNER{A^q\xi}{\eta}_0 .
  \end{equation}
\item[\texttt{(Restriction)}]
  For each $q\in U_2$ the restriction of $A^q$ to $H_2$ takes values
  in $H_1$ and is bounded as a map $A^q_2\colon H_2\to H_1$.
\item[\texttt{(Continuity)}]
  Both maps
  \begin{equation*}
  \begin{split}
     U_1\to \Ll(H_1,H_0),\quad q
     &\mapsto A^q
  \\
     U_2\to \Ll(H_2,H_1),\quad q
     &\mapsto A^q_2
\end{split}
\end{equation*}
are continuous.
\item[\texttt{(Fredholm)}]
  For every $q\in U_1$ the map $A^q\colon H_1\to H_0$ is Fredholm of
  index zero. For every $q\in U_2$ the restriction $A^q_2\colon H_2\to
  H_1$ is Fredholm of index zero as well.
\end{labeling}
\end{definition}

\begin{definition}[Floer function]
\label{def:Floer-function}
We say that a function $f\colon U_1\to\R$ is \textbf{Floer} if it
admits a Floer gradient and a Floer Hessian.
\end{definition}

\begin{remark}[Symmetry]
At any $q\in U_1$
a Floer Hessian is symmetric, namely
\begin{equation*}
   \INNER{A^q \xi}{\eta}_0=\INNER{\xi}{A^q\eta}_0
\end{equation*}
for all $\xi,\eta\in H_1$.
The reason is that $d^2 f|_q (\xi,\eta)$ is symmetric in $\xi,\eta$.
\end{remark}

\begin{remark}[Floer Hessian is derivative of Floer gradient]
Let $f\colon U_1\to\R$ be a Floer function and $A^q$ its Floer
Hessian. Then there are the identities
\begin{equation}\label{eq:Floer-Hess-gradient}
   A^q=d\Nabla{}f|_q
   ,\qquad
   A^q_2=d(\Nabla{}f)_2|_q .
\end{equation}
By \texttt{(Differentiability)}
we may differentiate~(\ref{eq:Floer-gradient}),
applying~(\ref{eq:Floer-Hessian}) we get
$$
   \INNER{d\Nabla{} f|_q\eta}{\xi}_0
   =d^2f|_q(\xi,\eta)
   =\INNER{A^q\xi}{\eta}_0
$$
for all $\xi,\eta\in H_1$.
Since $H_1$ is dense in $H_0$ it follows that this equation holds true
for any $\eta\in H_1$ and $\xi\in H_0$,
from which the first identity in~(\ref{eq:Floer-Hess-gradient}) follows.
The second identity follows by the same calculation,
just start with $\xi,\eta\in H_2$ and use that $H_2\subset H_1$.
\end{remark}

\begin{theorem}[Pull-back]\label{thm:pull-back}
Let $(H_0,H_1,H_2)$ be a Hilbert space triple.
Consider a Floeromorphism $\phi\colon U_1\to V_1$ between open
subsets of $H_1$. 
Let $f\colon V_1\to\R$ be a Floer function.
Then the composition $\tilde f:=f\circ \phi\colon U_1\to \R$ is a Floer function.
\end{theorem}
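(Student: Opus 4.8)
The plan is to construct explicitly a Floer gradient and a Floer Hessian for $\tilde f=f\circ\phi$ out of those of $f$, using the classical chain rule together with the extension-and-adjoint package built into the definition of an $s$-Floer map. Since $\phi$ and $f$ are $C^2$, so is $\tilde f$, and the first- and second-order chain rules give, for $q\in U_1$ and $\xi,\eta\in H_1$,
\[
   d\tilde f|_q\xi = df|_{\phi(q)}(d\phi|_q\xi),
   \qquad
   d^2\tilde f|_q(\xi,\eta)
   = d^2f|_{\phi(q)}(d\phi|_q\xi,d\phi|_q\eta)
   + df|_{\phi(q)}\bigl(d^2\phi|_q(\xi,\eta)\bigr).
\]
The whole proof amounts to rewriting the right-hand sides in terms of $H_0$-inner products against suitable operators and then checking the axioms of Definitions~\ref{def:Floer-gradient} and~\ref{def:Floer-Hessian} for those operators.

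For the gradient I would rewrite $df|_{\phi(q)}(d\phi|_q\xi)=\INNER{\Nabla{}f|_{\phi(q)}}{D\phi|_q\xi}_0=\INNER{(D\phi|_q)^*\Nabla{}f|_{\phi(q)}}{\xi}_0$, where $(D\phi|_q)^*$ is the $H_0$-adjoint of the extension $D\phi|_q\in\Ll(H_0)$ from axiom~(i$)_1$, and accordingly set $\Nabla{}\tilde f|_q := (D\phi|_q)^*\,\Nabla{}f|_{\phi(q)}$. The \texttt{($H_0$-gradient)} identity is then immediate. The \texttt{(Restriction)} property uses the scale-duality fact that, because $D\phi|_q\in\Ll(H_{-1})$ for $q\in U_2$ by axiom~(i$)_2$, its $H_0$-adjoint maps $H_1$ into $H_1$; since $\Nabla{}f|_{\phi(q)}\in H_1$ by the restriction property for $f$, we get $\Nabla{}\tilde f|_q\in H_1$. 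The \texttt{(Differentiability)} of both gradient maps follows by writing them as compositions of the $C^1$ maps $q\mapsto(D\phi|_q)^*$ (into $\Ll(H_0)$, resp.\ $\Ll(H_1)$, using that passing to the $H_0$-adjoint is an isometric isomorphism $\Ll(H_0)\to\Ll(H_0)$, resp.\ $\Ll(H_{-1})\to\Ll(H_1)$) and $q\mapsto\Nabla{}f|_{\phi(q)}$ (into $H_0$, resp.\ $H_1$), postcomposed with the continuous bilinear evaluation.

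For the Hessian I would treat the two chain-rule terms separately. The first term becomes $\INNER{(D\phi|_q)^*A^{\phi(q)}D\phi|_q\,\xi}{\eta}_0$. For the second, axiom~(ii$)_1$ lets me read $(\xi,\eta)\mapsto\INNER{\Nabla{}f|_{\phi(q)}}{D^2\phi|_q(\xi,\eta)}_0$ as a bounded bilinear form on $H_s\times H_0$, which the Riesz representation turns into a bounded operator $B^q\in\Ll(H_s,H_0)$. Setting
\[
   \tilde A^q := (D\phi|_q)^*\,A^{\phi(q)}\,D\phi|_q + B^q
\]
gives the \texttt{($H_0$-Hessian)} identity. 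The \texttt{(Restriction)} and \texttt{(Continuity)} axioms for $\tilde A$ are then verified exactly as for the gradient, now invoking axiom~(ii$)_2$ (which reads the second term as a bounded form on $H_{1+s}\times H_{-1}$, so that $B^q\in\Ll(H_{1+s},H_1)$) together with the continuity statements of Remark~\ref{rem:extension} and the continuity of $q\mapsto A^{\phi(q)}$ and $q\mapsto A^{\phi(q)}_2$.

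The main obstacle is the \texttt{(Fredholm)} axiom, and this is exactly where the gap $s<1$ is used. Since $\phi$ is a Floeromorphism it restricts to $C^2$-diffeomorphisms on both levels, so $D\phi|_q$ is a toplinear isomorphism of $H_1$ and of $H_2$; moreover $\phi^{-1}$ being an $s$-Floer map forces $D\phi|_q$ to be invertible on $H_0$ and on $H_{-1}$, whence $(D\phi|_q)^*$ is invertible on $H_0$ and on $H_1$. Therefore the leading term $(D\phi|_q)^*A^{\phi(q)}D\phi|_q$ is a conjugation of the index-zero Fredholm operator $A^{\phi(q)}$ by isomorphisms, hence itself Fredholm of index zero as a map $H_1\to H_0$, and likewise $(D\phi|_q)^*A^{\phi(q)}_2D\phi|_q$ as a map $H_2\to H_1$. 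It remains to see that $B^q$ is a compact perturbation: $B^q$ is bounded from $H_s$ into $H_0$, while we regard it on $H_1$, and the inclusion $H_1\INTO H_s$ is compact precisely because $s<1$; hence $B^q\colon H_1\to H_0$ is compact, and likewise $B^q\colon H_2\to H_1$ factors through the compact inclusion $H_2\INTO H_{1+s}$. Adding a compact operator preserves the Fredholm index, so $\tilde A^q$ and $\tilde A^q_2$ are Fredholm of index zero. This shows $\tilde f$ admits a Floer gradient and a Floer Hessian, hence is a Floer function.
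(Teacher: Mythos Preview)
Your proof is correct and follows essentially the same route as the paper: you define $\Nabla{}\tilde f|_q=(D\phi|_q)^*\Nabla{}f|_{\phi(q)}$ and $\tilde A^q=(D\phi|_q)^*A^{\phi(q)}d\phi|_q+B^q$ (the paper writes $K^q\circ\iota_s$ for your $B^q$), and verify the axioms via the same mechanisms---Corollary~\ref{cor:adjoints} for \texttt{(Restriction)}, the $C^1$ hypotheses in (i$)_1$/(i$)_2$ for \texttt{(Differentiability)}, the Riesz/duality argument on $H_{1+s}\times H_{-1}$ for the level-$2$ Hessian, and compactness of $H_1\hookrightarrow H_s$ and $H_2\hookrightarrow H_{1+s}$ for \texttt{(Fredholm)}. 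The only cosmetic difference is that the paper spells out the Leibniz-rule derivative of the gradient explicitly, whereas you invoke the smoothness of bilinear evaluation; both are fine.
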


\begin{proof} There are two steps.

\smallskip
\noindent
\textbf{Step~1 (Floer gradient).}
For $q\in U_1$ and $\xi\in H_1$ the chain rule yields
\begin{equation*}
\begin{split}
   d(f\circ\phi)|_q\xi
   &=df|_{\phi(q)}\circ d\phi|_q\xi\\
   &=\INNER{\Nabla{}f|_{\phi(q)}}{d\phi|_q\xi}_0\\
   &=\INNER{\Nabla{}f|_{\phi(q)}}{D\phi|_q\xi}_0\\
   &=\INNER{(D\phi|_q)^*\Nabla{}f|_{\phi(q)}}{\xi}_0 .
\end{split}
\end{equation*}
Now we define the \textbf{Floer gradient of \boldmath$f\circ\phi$} by
\begin{equation}\label{eq:Floer-grad-comp}
\boxed{
   \Nabla{} \tilde f |_q
   =\Nabla{} (f\circ\phi)|_q
   :=(D\phi|_q)^*\Nabla{}f|_{\phi(q)} \in H_0\quad \forall q\in U_1.
}
\end{equation}

\smallskip
\noindent
\texttt{($H_0$-gradient)}
This axiom holds by definition.

\smallskip
\noindent
\texttt{(Restriction)}
We need to show that the restriction of
$\Nabla{} \tilde f \colon U_1\to H_0$
to $U_2$ takes values in $H_1$, in symbols
$(\Nabla{}\tilde f )_2 \colon U_2\to H_1$.
For this purpose take $q\in U_2$.
Then $\Nabla{}f|_{\phi(q)} \in H_1$
since $\phi(q)\in V_2$ and $f$ is a Floer function.
By (i$)_2$ in Definition~\ref{def:Floer-map}
the hypothesis of Corollary~\ref{cor:adjoints}
is satisfied and the conclusion is
\begin{equation}\label{eq:gdhj39}
   (D\phi|_q)^*\in\Ll(H_1) .
\end{equation}
Hence $\Nabla{} \tilde f |_q
=(D\phi|_q)^*\Nabla{}f|_{\phi(q)} \in H_1$
whenever $q\in U_2$.

\smallskip
\noindent
\texttt{(Differentiability)}
Level\,\,1:
We need to show that the map $U_1\to H_0$, $q\mapsto \Nabla{} \tilde f |_q
=(D\phi|_q)^*\Nabla{}f|_{\phi(q)}$, is $C^1$ (continuously differentiable).
Differentiating~(\ref{eq:Floer-grad-comp}) at a point $q\in U_1$
with the help of the Leibniz rule, which follows as in~(\ref{eq:Leibniz}),
we obtain the formula
$$
   d\Nabla{}\tilde f|_q\xi
   =d(D\phi)^*|_q\left(\xi,\Nabla{} f|_{\phi(q)}\right)
   +(D\phi|_q)^*\circ d\Nabla{} f|_{\phi(q)}\circ d\phi|_q\xi
$$
for every $\xi\in H_1$.
Because by assumption $D\phi\colon U_1\to\Ll(H_0)$ is $C^1$
and $*\colon \Ll(H_0)\to\Ll(H_0)$ is linear,
the map $(D\phi)^*\colon U_1\to\Ll(H_0)$ is $C^1$, too.
Since both maps $q\mapsto \phi(q)\mapsto \Nabla{} f|_{\phi(q)}$
are continuous, the first summand in the displayed formula
is continuous. The second summand is a composition of
a $C^1$ map, a $C^0$ map, and a $C^1$ map, thus $C^0$ itself.

\smallskip
\noindent
Level\,\,2:
We need to show that the map
$$
   U_2\to H_1,\quad
   q\mapsto(\Nabla{}\tilde f)_2 |_q
   =(D\phi|_q)^*(\Nabla{}f)_2|_{\phi_2(q)}
$$
is $C^1$. Differentiating this map at $q\in U_2$ we obtain the formula
$$
   d(\Nabla{}\tilde f)_2|_q\xi
   =d(D\phi)^*|_q\left(\xi,(\Nabla{} f)_2|_{\phi_2(q)}\right)
   +(D\phi|_q)^*\circ d(\Nabla{} f)_2|_{\phi_2(q)}\circ d\phi_2|_q\xi
$$
for every $\xi\in H_2$.
Because by assumption (i$)_2$ the map
$D\phi\colon U_2\to\Ll(H_{-1})$ is $C^1$
and $*\colon \Ll(H_{-1})\to\Ll(H_{-1}^*)=\Ll(H_1)$ is linear,
it follows that the map $(D\phi)^*\colon U_1\to\Ll(H_1)$ is $C^1$.
Since both maps $q\mapsto \phi_2(q)\mapsto (\Nabla{} f)_2|_{\phi_2(q)}$
are continuous, the first summand in the displayed formula
is continuous. 
The second summand is a composition of
a $C^1$ map, a $C^0$ map, and a $C^0$ map, thus $C^0$ itself.

\medskip
\noindent
\textbf{Step~2 (Floer Hessian).}
We need to define a map
$$
   \tilde A=A(f\circ \phi)\colon U_1\times H_1\to H_0,\quad
   (q,\xi)\mapsto \tilde A(q,\xi)=:\tilde A^q \xi
$$
such that $A^q\in\Ll(H_1,H_0)$ and verify the four axioms in
Definition~\ref{def:Floer-Hessian}.

\smallskip
\noindent
\texttt{($H_0$-Hessian)}
  For $q\in U_1$ the chain rule yields
  $
     d(f\circ\phi)|_q
     =df|_{\phi(q)}\circ d\phi|_q
  $,
  so
  \begin{equation}\label{eq:tilde-A-pre}
  \begin{split}
     &d^2(f\circ\phi)|_q(\xi,\eta)\\
     &=d^2f|_{\phi(q)} \left( d\phi|_q \xi, d\phi|_q \eta\right)
     +df|_{\phi(q)}\circ d^2\phi|_q(\xi,\eta)\\
     &=\INNER{A^{\phi(q)} d\phi|_q \xi}{d\phi|_q \eta}_0
     +\INNER{\Nabla{} f|_{\phi(q)}}{d^2\phi|_q(\xi,\eta)}_0
  \end{split}
  \end{equation}
  for all $\xi,\eta\in H_1$ where $A=A(f)$ and $\Nabla{} f$ is
  the $H_0$-gradient. Let $\iota_s\colon H_1\to H_s$ be inclusion.
  We define the \textbf{Floer Hessian of \boldmath$f\circ\phi$} by
    \begin{equation}\label{def:Floer-Hessian-pullback}
\boxed{
     \tilde A^q=A(f\circ \phi)^q
     :={D\phi|_q}^*\circ A^{\phi(q)}\circ d\phi|_q +K^q \circ \iota_s
     \colon H_1\to H_0
}
  \end{equation}
  where by the theorem of Riesz there exists a unique operator $K^q$ such that
    \begin{equation}\label{eq:K^q}
     K^q\in\Ll(H_s,H_0)
     ,\quad
     \INNER{K^q\xi}{\eta}_0
     =\INNER{\Nabla{} f|_{\phi(q)}}{D^2\phi|_q(\xi,\eta)}_0 
{\color{gray}\,
     =:B^q(\xi,\eta)
     ,
}
  \end{equation}
  for all $\xi\in H_s$ and $\eta\in H_0$.
  If $\xi,\eta\in H_1$, then by~(\ref{eq:tilde-A-pre})
  the identity~(\ref{eq:Floer-Hessian}) for
  $\tilde A$ and $\tilde f$ holds true and this proves
  \texttt{($H_0$-Hessian)} for $\tilde A$.

\smallskip
\noindent
\texttt{(Restriction)}
  For each $q\in U_2$ the restriction of $\tilde A^q$ to $H_2$ takes values
  in $H_1$ and is bounded as a map $\tilde A^q_2\colon H_2\to H_1$.
  To see this pick $q\in U_2$. Then $\phi(q)\in V_2$,
  hence $A^{\phi(q)}=A^{\phi(q)}(f)$ maps $H_2$ to $H_1$ as $f$ is
  a Floer function, so
  $$
     \underbrace{{D\phi|_q}^*}_{H_1\stackrel{\text{(\ref{eq:gdhj39})}}{\to} H_1}
     \circ \underbrace{A^{\phi(q)}}_{H_2\to H_1}
     \circ \underbrace{d\phi|_q}_{H_2\to H_2}
     \in \Ll(H_2,H_1) .
  $$
  This proves that summand one in~(\ref{def:Floer-Hessian-pullback})
  lies in $\Ll(H_2,H_1)$. Concerning summand two
  we next show that
  \begin{equation}\label{eq:K^q-U_2}
     K^q\in \Ll(H_{1+s},H_1)\cap \Ll(H_s,H_0)
     ,\quad
     \forall q\in U_2 .
  \end{equation}
  To see this pick $\xi\in H_{1+s}$ and $\eta\in H_{-1}$.
  By density of $H_0$ in $H_{-1}$ pick a sequence
  $(\eta_\nu)\subset H_0$ converging in $H_{-1}$ to $\eta$.
  By~\cite[App.\,A.3]{Frauenfelder:2024d} we have 
  the insertion isometry $\flat\colon H_1\to H_{-1}^*$,
  $\Nabla{} f|_{\phi(q)}\mapsto\INNER{\Nabla{} f|_{\phi(q)}}{\cdot}_0$.
  Then
  \begin{equation*}
  \begin{split}
     \abs{\INNER{K^q\xi}{\eta_\nu }_0}
     &\le\norm{\flat \Nabla{} f|_{\phi(q)}}_{H_{-1}^*}
     \norm{D^2\phi|_q(\xi,\eta_\nu )}_{-1}
     \\
     &\le \norm{\Nabla{} f|_{\phi(q)}}_1
     \norm{D^2\phi|_q}_{\Ll(H_{1+s},H_{-1};H_{-1})}
     \norm{\xi}_{1+s}\norm{\eta_\nu }_{-1}
  \end{split}
  \end{equation*}
  for every $\nu$.
  Take the limit $\nu\to\infty$ to obtain the estimate
  $$
   \abs{\INNER{K^q\xi}{\eta}_0}
   \le \underbrace{\norm{\Nabla{} f|_{\phi(q)}}_1
   \norm{D^2\phi|_q}_{\Ll(H_{1+s},H_{-1};H_{-1})}}_{=:\kappa_{1+s}}
   \norm{\xi}_{1+s}\norm{\eta }_{-1}
  $$
for any $\xi\in H_{1+s}$ and $\eta\in H_{-1}$.
Hence we see that the element $\INNER{K^q\xi}{\cdot}_0$ of $H_0^*$ is
even an element of $H_{-1}^*$ whose norm is bounded by
$\norm{\INNER{K^q\xi}{\cdot}_0}_{H_{-1}^*}\le \kappa_{1+s} \norm{\xi}_{1+s}$.
Using the isometric identification of $H_{-1}^*$ with $H_1$,
see~\cite[App.\,A.3]{Frauenfelder:2024d},
we see that $K^q\xi$ is an element of $H_1$ of norm
$\norm{K^q\xi}_1\le \kappa_{1+s} \norm{\xi}_{1+s}$.
Therefore $K^q$ is a bounded linear operator $H_{1+s}\to H_1$
whose operator norm is bounded by $\kappa_{1+s}$.
Such argument will reappear in
the proof of Lemma~\ref{le:bi-lin-K}.
\\
Abbreviating by $\iota_{1+s}\colon H_2\to H_{1+s}$ the inclusion
we conclude from~(\ref{def:Floer-Hessian-pullback}) that
$\tilde A^q$ restricts to an operator
\begin{equation}\label{def:Floer-Hessian-pullback-2}
\boxed{
   \tilde A^q_2
   ={D\phi|_q}^*\circ A^{\phi(q)}_2\circ d\phi_2|_q +K^q \circ \iota_{1+s}
   \in\Ll(H_2,H_1) .
}
\end{equation}

\smallskip
\noindent
\texttt{(Continuity)}
  We need to show that both maps
  \begin{equation*}
  \begin{split}
     U_1\to \Ll(H_1,H_0),\quad q
     &\mapsto \tilde A^q
  \\
     U_2\to \Ll(H_2,H_1),\quad q
     &\mapsto \tilde A^q_2
\end{split}
\end{equation*}
are continuous.
By Definition~\ref{def:Floer-map} (i$)_1$
the map $D\phi\colon U_1\to \Ll(H_0)$ is continuous.
Since taking adjoints is continuous as a map $\Ll(H_0)\to \Ll(H_0)$
we conclude that in~(\ref{def:Floer-Hessian-pullback})
the first term is continuous as a map $(D\phi)^*\colon U_1\to\Ll(H_0)$.
The map $A^{\phi(q)}\colon U_1\to\Ll(H_1,H_0)$ is continuous by
\texttt{(Continuity)} of Floer Hessians. 
The map $d\phi\colon U_1\to\Ll(H_1)$ is continuous since $\phi$ is
$\SSC^2$, thus $C^2$.
Hence the composition ${D\phi|_\cdot}^*\circ A^{\phi(\cdot)}\circ
d\phi|_\cdot\colon U_1\to\Ll(H_1,H_0)$ is continuous.

It remains the second summand $K^q\circ \iota_s$
in~(\ref{def:Floer-Hessian-pullback}). It suffices to show
that the map $K\colon U_1\to\Ll(H_s,H_0)$, $q\mapsto K^q$, is continuous.
To see this we first show that the bi-linear form
$B\colon U_1\to \Ll(H_s,H_0;\R)$, $q\mapsto B^q$, is continuous.
This follows from the fact that, by Definition~\ref{def:Floer-map}
(ii$)_1$, the map 
$$
   D^2\phi \colon U_1\to\Ll(H_s,H_0;H_0)
   ,\quad
   q\mapsto D^2\phi|_q
$$
is continuous. Moreover, by continuity of $\phi$ and
\texttt{(Differentiability)} of the Floer gradient the map
$U_1\to V_1\to H_0$, $q\mapsto\phi(q)\mapsto \Nabla{} f|_{\phi(q)}$,
is continuous. Therefore $q\mapsto B^q$ is continuous.
Since the Riesz map which associates to a bi-linear form a linear map
is itself continuous in the bi-linear form,
we conclude that the map $K\colon U_1\to \Ll(H_s,H_0)$,
$q\mapsto K^q$, is continuous as well.
This finishes the proof that $q\mapsto \tilde A^q$ is continuopus as a
map $U_1\to\Ll(H_1,H_0)$.

\smallskip
Continuity of the second map $q\mapsto \tilde A^q_2$:
Consider the first summand in~(\ref {def:Floer-Hessian-pullback-2}).
The map $q\mapsto d\phi_2\in\Ll(H_2)$ is continuous since $\phi_2\in C^2$ and
$U_2\ni q\mapsto\phi(q)\mapsto A^{\phi(q)}_2\in \Ll(H_2,H_1)$ is
continuous since $\phi_2\in C^2$ and by \texttt{(Continuity)} of Floer Hessians. 
The map $U_2\to\Ll(H_{-1})\to\Ll(H_1)$, $q\mapsto T:=D\phi|_q\mapsto T^*$,
is continuous by Definition~\ref{def:Floer-map} (i$)_2$
and by continuity of taking the adjoint.
\\
Consider the second summand $K^q\circ\iota_{1+s}$
in~(\ref {def:Floer-Hessian-pullback-2}).
Here $\iota_{1+s}\in\Ll(H_2,H_{1+s})$ is inclusion.
It remains to show that the map $U_2\mapsto \Ll(H_{1+s},H_1)$,
$q\mapsto K^q$, is continuous.
To see this we show that the bi-linear map $U_2\mapsto
\Ll(H_{1+s},H_{-1};\R)$, $q\mapsto B^q$, see~(\ref{eq:K^q}), is continuous.
By definition of the bi-linear form $B^q$ this follows from
continuity of the map $U_2\to \Ll(H_{1+s},H_{-1};H_{-1})$,
$q\mapsto D^2\phi|_q$, according to Definition~\ref{def:Floer-map} (ii$)_2$
and continuity of the map $U_2\to V_2\to H_1=H_{-1}^*$,
$q\mapsto \phi(q)\mapsto \Nabla{}f|_{\phi(q)}$, 
by continuity of $\phi_2$ and \texttt{(Differentiability)} of $\Nabla{}f$.
This proves continuity of $U_2\ni q\mapsto \tilde A^q_2\in\Ll(H_2,H_1)$.

\smallskip
\noindent
\texttt{(Fredholm)}
  Let $q\in U_1$.
  Then the first summand in~(\ref{def:Floer-Hessian-pullback}) is a
  Fredholm operator of index zero, since this is true for
  $A^{\phi(q)}\colon H_1\to H_0$ and both operators $d\phi|_q\in\Ll(H_1)$
  and ${D\phi|_q}^*\in\Ll(H_0)$ are isomorphisms.
  Concerning the second summand note that inclusion $\iota_s\colon
  H_1\to H_s$ is compact 
  due to the assumption $s<1$
  and, since $K$ is bounded, the composition $K\iota_s$ is compact as well.
  Since the Fredholm property as well as the index are stable under
  compact perturbation we conclude that the sum $\tilde A^q$ is a
  Fredholm operator of index zero as well.

  It remains to show that $\tilde A^q_2$ is also a Fredholm operator
  of index zero whenever $q\in U_2$.
  In view of formula~(\ref{def:Floer-Hessian-pullback-2})
  this follows by the same reasoning.
\end{proof}

\subsubsection*{Adjoints and bi-linear maps used in the proof}

\begin{lemma}\label{le:adjoints}
Let $(H_0,H_1)$ be a Hilbert space pair.
For $T\in \Ll(H_1)$ we denote by $T^*\in \Ll(H_1^*)$ the $H_1$-adjoint
of $T$. Then the following is true
$$
   T\in\Ll(H_0)\cap \Ll(H_1)
   \qquad\Rightarrow\qquad
   T^*\in\Ll(H_0^*)\cap \Ll(H_1^*) .
$$
\end{lemma}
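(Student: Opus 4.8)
The plan is to reduce the assertion about the $H_1$-transpose $T^*$ to the corresponding fact for the $H_0$-transpose, exploiting that a Hilbert space pair comes with a dense continuous inclusion $\iota\colon H_1\INTO H_0$. Dualizing this inclusion yields a continuous injection $\iota^*\colon H_0^*\INTO H_1^*$, $\lambda\mapsto\lambda\circ\iota$, which is injective precisely because $H_1$ is dense in $H_0$; this is the inclusion through which the statement $T^*\in\Ll(H_0^*)$ has to be read, namely that $T^*$ preserves the subspace $\iota^*(H_0^*)$ of $H_1^*$ and is bounded there. Note that $T^*\in\Ll(H_1^*)$ is immediate, being the transpose of $T\in\Ll(H_1)$ with $\norm{T^*}_{\Ll(H_1^*)}=\norm{T}_{\Ll(H_1)}$, so the only content lies in the factor $\Ll(H_0^*)$.

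First I would introduce the $H_0$-transpose: since by hypothesis $T\in\Ll(H_0)$, its transpose $S\in\Ll(H_0^*)$ is bounded with $\norm{S}_{\Ll(H_0^*)}\le\norm{T}_{\Ll(H_0)}$. The key step is the naturality identity $T^*\circ\iota^*=\iota^*\circ S$, i.e. that $T^*$ restricts along the inclusion to $S$; this is just the dual of the commuting square $T\iota=\iota T$ expressing that $T$ preserves $H_1$. To verify it directly I would test against vectors: for $\lambda\in H_0^*$ and $x\in H_1$ one computes $(T^*\iota^*\lambda)(x)=(\iota^*\lambda)(Tx)=\lambda(Tx)$, and, since $T$ agrees on the dense subspace $H_1$ whether viewed in $\Ll(H_1)$ or $\Ll(H_0)$, this equals $(S\lambda)(x)=(\iota^*S\lambda)(x)$. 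As this holds for every $x\in H_1$ we get $T^*\iota^*\lambda=\iota^*S\lambda$ in $H_1^*$, so $T^*$ maps $\iota^*(H_0^*)$ into itself and there coincides with the bounded operator $S$. This is exactly $T^*\in\Ll(H_0^*)$, and the norm bound $\norm{T^*}_{\Ll(H_0^*)}\le\norm{T}_{\Ll(H_0)}$ comes for free.

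The one point that requires care — and the only place where the hypotheses genuinely enter — is the compatibility of the two duality pairings across the scale. Concretely one must know that $Tx$ computed in $H_1$ coincides with $Tx$ computed in $H_0$ for $x\in H_1$, which is guaranteed because the two bounded operators agree on the common dense domain $H_1$; the same density is what makes $\iota^*$ injective and hence $H_0^*$ a genuine subspace of $H_1^*$. Once this bookkeeping is in place the argument is purely formal, and it is precisely this mechanism — transposing a commuting inclusion square — that underlies the corollaries on adjoints invoked later in the proof of the pull-back theorem.
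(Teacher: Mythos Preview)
Your argument is correct and is essentially the same as the paper's: both introduce the $H_0$-transpose (your $S$, the paper's $T^{*_0}$), evaluate on a test vector $x\in H_1$ to obtain $(T^*\lambda)(x)=\lambda(Tx)=(S\lambda)(x)$, and then invoke density of $H_1$ in $H_0$ to conclude that $T^*|_{H_0^*}=S\in\Ll(H_0^*)$. Your version is slightly more explicit in packaging this as the commuting square $T^*\circ\iota^*=\iota^*\circ S$, whereas the paper phrases the same identity as $T^*v_0^*=T^{*_0}v_0^*|_{H_1}$, but the content is identical.
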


\begin{proof}
To see that $T^*\in\Ll(H_0^*)$ pick $v_0^*\in H_0^*$.
Since $T\in\Ll(H_0)$ it also has an $H_0$-adjoint $T^{*_0}\in \Ll(H_0^*)$.
We claim that
\begin{equation}\label{eq:jhghgh38}
 T^* v_0^*=T^{*_0} v_0^*|_{H_1} .
\end{equation}
To see this pick $v_1\in H_1$.
Using the definition of $H_1$- and then $H_0$-adjoint we obtain
$(T^*v_0^*) v_1=v_0^*(T v_1)=(T^{*_0} v_0^*) v_1$.
This proves~(\ref{eq:jhghgh38}).

Since $H_1$ is dense in $H_0$ 
it follows from~(\ref{eq:jhghgh38}) that
$T^*v_0^*$ uniquely extends to a bounded linear map $H_0\to\R$
which coincides with $T^{*_0}v_0^*$.
In particular, $T^*v_0^*$ lies in $H_0^*$ and we have the identity
$T^* v_0^*=T^{*_0}v_0^*$ in $H_0^*$.
Since $v_0^*$ was an arbitrary element of $H_0^*$
we obtain that
$$
   T^*|_{H_0^*} = T^{*_0} \in \Ll(H_0^*).
$$
This proves that the $H_1$-adjoint
$T^*$ is an element of $\Ll(H_0^*)\cap \Ll(H_1^*)$.
\end{proof}

\begin{corollary}\label{cor:adjoints}
Under the hypotheses of Lemma~\ref{le:adjoints} it holds
$$
   T\in\Ll(H_0)\cap \Ll(H_{-1})
   \qquad\Rightarrow\qquad
   T^*\in\Ll(H_0)\cap \Ll(H_1) .
$$
\end{corollary}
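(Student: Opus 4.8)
The plan is to deduce the corollary from Lemma~\ref{le:adjoints} by shifting the scale down by one level, rather than by repeating its argument. Concretely, I would apply the lemma not to the original pair $(H_0,H_1)$ but to the pair $(H_{-1},H_0)$, with $H_{-1}$ playing the role of $H_0$ and $H_0$ playing the role of $H_1$. This is legitimate because the proof of Lemma~\ref{le:adjoints} uses about the inclusion only that the smaller space is dense in the larger one, and $H_0$ is dense in $H_{-1}$. Under this substitution the hypothesis $T\in\Ll(H_0)\cap\Ll(H_{-1})$ is exactly the hypothesis ``$T$ lies in the intersection of the two operator spaces of the pair'' required by the lemma.

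The lemma then produces an adjoint $T^*$ lying in $\Ll(H_{-1}^*)\cap\Ll(H_0^*)$; here $T^*$ is the $H_0$-adjoint (the shifted ``$H_1$-adjoint'' of the lemma), which is precisely the adjoint used in~(\ref{eq:Floer-grad-comp}) and~(\ref{def:Floer-Hessian-pullback}) of Theorem~\ref{thm:pull-back}. It then remains only to translate the two dual spaces back into the scale. I would invoke the canonical Riesz identification $H_0^*\cong H_0$, under which $T^*\in\Ll(H_0^*)$ becomes the usual $H_0$-adjoint in $\Ll(H_0)$, together with the insertion isometry $\flat\colon H_1\to H_{-1}^*$ of~\cite[App.\,A.3]{Frauenfelder:2024d}, under which $T^*\in\Ll(H_{-1}^*)$ corresponds to an operator in $\Ll(H_1)$. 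Combining the two memberships yields $T^*\in\Ll(H_0)\cap\Ll(H_1)$, as claimed.

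The step I expect to require the most care is checking that these two duality identifications intertwine the adjoint operation correctly, i.e. that the transpose operator produced abstractly on $H_{-1}^*$ really corresponds, via $\flat$, to the operator on $H_1$ we want, and that its restriction to $H_0^*$ matches the genuine $H_0$-adjoint under Riesz. This is the same bookkeeping carried out in the proof of Lemma~\ref{le:adjoints} (the identity $T^*v_0^*=T^{*_0}v_0^*|_{H_1}$ there), now transported one level down, so no new idea is needed; one only has to confirm that the dense inclusion $H_0\hookrightarrow H_{-1}$ dualizes to $H_{-1}^*\hookrightarrow H_0^*$ compatibly with $H_0^*\cong H_0$ and $H_{-1}^*\cong H_1$. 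Everything else is a direct transcription of the lemma.
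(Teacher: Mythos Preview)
Your approach is correct and is exactly what the paper does: apply Lemma~\ref{le:adjoints} to the shifted pair $(H_{-1},H_0)$ and then invoke the Riesz isometry $H_0^*\cong H_0$ together with the insertion isometry $H_{-1}^*\cong H_1$ from \cite[App.\,A.3]{Frauenfelder:2024d}. The paper's proof is a one-line version of precisely this, and your remark that the relevant adjoint is the $H_0$-adjoint (matching the one used in Theorem~\ref{thm:pull-back}) is the right bookkeeping observation.
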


\begin{proof}
Lemma~\ref{le:adjoints} together with the isometries
$H_0\simeq H_0^*$ and $H_1\simeq H_{-1}^*$
where the latter isometry stems from~\cite[App.\,A.3]{Frauenfelder:2024d}.
\end{proof}

\begin{lemma}\label{le:bi-lin-K}
Let $(H_0,H_1)$ be a Hilbert space pair and
$B\colon H_0\times H_0\to\R$ a continuous bi-linear map. By the
theorem of Riesz there is a well defined operator $K\in\Ll(H_0)$ such that
$$
   B(\xi,\eta)=\INNER{K\xi}{\eta}_0 .
$$
Suppose that there is a constant $\kappa >0$ such that
$$
   \abs{B(\xi,\eta)}\le \kappa \norm{\xi}_1\cdot\norm{\eta}_{-1}
$$
for all $\xi\in H_1$ and $\eta\in H_0$.
In this case $K$ restricts to a bounded linear operator on $H_1$, in
symbols $K\in\Ll(H_1)$.
\end{lemma}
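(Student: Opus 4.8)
The plan is to fix $\xi\in H_1$ and show directly that $K\xi$, which a priori is only an element of $H_0$, in fact lies in $H_1$ together with the quantitative bound $\norm{K\xi}_1\le\kappa\norm{\xi}_1$. This simultaneously yields that $K$ restricts to a map $H_1\to H_1$ and that this restriction is bounded with operator norm at most $\kappa$. The whole argument is a duality argument running through the scale $H_1\subset H_0\subset H_{-1}$, and it parallels the \texttt{(Restriction)} step in the proof of Theorem~\ref{thm:pull-back}, namely the estimate leading to~(\ref{eq:K^q-U_2}).

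First I would regard the map $\eta\mapsto\INNER{K\xi}{\eta}_0=B(\xi,\eta)$ as a linear functional on $H_0$, i.e. as the element of $H_0^*$ obtained by applying the Riesz isomorphism to $K\xi$. The hypothesis $\abs{B(\xi,\eta)}\le\kappa\norm{\xi}_1\norm{\eta}_{-1}$ says precisely that this functional is continuous for the \emph{weaker} norm $\norm{\cdot}_{-1}$, with operator norm at most $\kappa\norm{\xi}_1$. Since $H_0$ is dense in $H_{-1}$, the functional extends uniquely to a bounded linear functional on $H_{-1}$, that is, to an element of $H_{-1}^*$ of norm $\le\kappa\norm{\xi}_1$. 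Invoking the isometric identification $H_1\simeq H_{-1}^*$ from~\cite[App.\,A.3]{Frauenfelder:2024d}, equivalently the insertion isometry $\flat\colon H_1\to H_{-1}^*$, $w\mapsto\INNER{w}{\cdot}_0$, I then obtain a unique $w_\xi\in H_1$ with $\norm{w_\xi}_1\le\kappa\norm{\xi}_1$ and $\INNER{w_\xi}{\eta}_0=\INNER{K\xi}{\eta}_0$ for every $\eta\in H_0$. Nondegeneracy of the $H_0$-inner product forces $w_\xi=K\xi$, whence $K\xi=w_\xi\in H_1$ with the asserted bound, completing the proof.

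The only real content is the passage from $H_0^*$ to the smaller space $H_{-1}^*$: a priori $\INNER{K\xi}{\cdot}_0$ is merely an $H_0$-bounded functional, and the hypothesis is exactly what upgrades it to an $H_{-1}$-bounded one. The two structural inputs that make this work are the density of $H_0$ in $H_{-1}$, needed for the unique extension, and the isometry $H_1\simeq H_{-1}^*$, needed to transport the extended functional back into $H_1$; both are already available, so no genuine obstacle remains beyond careful bookkeeping with the three pairings. The one point to keep clean is that the identity $\INNER{w_\xi}{\eta}_0=\INNER{K\xi}{\eta}_0$ is asserted only for $\eta\in H_0$, which is precisely what is needed to conclude $w_\xi=K\xi$ in $H_0$.
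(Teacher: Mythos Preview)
Your proof is correct and follows essentially the same route as the paper: fix $\xi\in H_1$, use the hypothesis to see that the functional $\eta\mapsto\INNER{K\xi}{\eta}_0$ extends from $H_0$ to a bounded functional on $H_{-1}$ of norm at most $\kappa\norm{\xi}_1$, and then invoke the isometry $H_1\simeq H_{-1}^*$ from \cite[App.\,A.3]{Frauenfelder:2024d} to conclude $K\xi\in H_1$ with $\norm{K\xi}_1\le\kappa\norm{\xi}_1$. Your version is in fact slightly more explicit than the paper's at the final identification step, spelling out that the resulting $w_\xi$ agrees with $K\xi$ via nondegeneracy of $\INNER{\cdot}{\cdot}_0$.
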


\begin{proof}
Let $\xi\in H_1$ and $\eta\in H_0$.
By hypothesis
$$
   \abs{\INNER{K\xi}{\eta}_0}
   \le\abs{B(\xi,\eta)}
   \le \kappa \norm{\xi}_1\cdot\norm{\eta}_{-1} .
$$
We define a continuous bi-linear map as follows
$$
   \INNER{K\xi}{\cdot}_0\colon H_{-1}\to\R
   ,\quad
   \eta\mapsto \lim_{\nu\to\infty} \INNER{K\xi}{\eta_\nu}_0
$$
where $(\eta_\nu)\subset H_0$ is a sequence converging in $H_{-1}$ to
$\eta\in H_{-1}$.
Hence we see that the element $\INNER{K\xi}{\cdot}_0$ of $H_0^*$ is
even an element of $H_{-1}^*$ whose norm is bounded by
$\norm{\INNER{K\xi}{\cdot}_0}_{H_{-1}^*}\le \kappa \norm{\xi}_1$.
Using the isometric identification of $H_{-1}^*$ with $H_1$,
see~\cite[App.\,A.3]{Frauenfelder:2024d},
we see that $K\xi$ is an element of $H_1$ of norm
$\norm{K\xi}_1\le \kappa \norm{\xi}_1$.
Therefore $K$ is a bounded linear operator $H_1\to H_1$
whose operator norm is bounded by $\kappa $.
\end{proof}

\section{The loop space as a Floerfold}

In this section we show that Floerfolds naturally arise
in the object of main interest in Floer theory,
namely the free loop space.

For any manifold we show that the space of small loops
has the structure of a Floerfold.
By a small loop we mean a loop whose image fits in a single chart.
It should be possible to give similarly the full loop space the structure
of a Floerfold by decomposing the loop into several pieces each of
which fits into a single chart.
To avoid technicalities we concentrate here on small loops.

\medskip
Consider open subsets $\Uu,\Vv\subset \R^n$ and the Hilbert space triple
$$
   H_0:=L^2(\SS^1,\R^n),\qquad
   H_1:=W^{1,2}(\SS^1,\R^n),\qquad
   H_2:=W^{2,2}(\SS^1,\R^n).
$$
Define open subsets
\begin{equation*}
\begin{split}
   U_\ell:=\{u\in H_\ell\mid u(t)\in\Uu\; \forall t\in\SS^1\}
   &\subset C^0(\SS^1,\Uu),\quad \ell=1,2,
\\
   V_\ell:=\{v\in H_\ell\mid v(t)\in\Vv\; \forall t\in\SS^1\}
   &\subset C^0(\SS^1,\Vv),\quad \ell=1,2 .
\end{split}
\end{equation*}
Given a diffeomorphism $\Phi=(\Phi_1,\dots,\Phi_n)\colon\Uu\to \Vv$,
we define an $\SSC^2$-diffeomorphism
$$
   \phi
   \colon H_1\supset U_1\to V_1\subset H_1,\quad
   u\mapsto \Phi\circ u
   =(\Phi_1(u(\cdot)),\dots, \Phi_n(u(\cdot)))
$$
whose components are maps
$\phi_i=\Phi_i(u(\cdot))\colon U_1\to W^{1,2}(\SS^1,\R)$.

\begin{theorem}\label{thm:loop-Floerfold}
$\phi\colon U_1\to V_1$ is an $s$-Floeromorphism
whenever $s\in(\frac12,1)$.
\end{theorem}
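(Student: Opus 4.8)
The plan is to produce the two natural candidate extensions $D\phi|_u$ and $D^2\phi|_u$ by differentiating the superposition operator $u\mapsto\Phi\circ u$ pointwise, and then to verify the four axioms of Definition~\ref{def:Floer-map} by reducing each to a mapping property of a multiplication operator on the scale $H_{-1}=W^{-1,2}\subset H_0=L^2\subset H_s=W^{s,2}\subset H_1=W^{1,2}\subset H_{1+s}=W^{1+s,2}\subset H_2=W^{2,2}$. That $\phi$ is two-level $\SSC^2$ is the standard smoothness of the Nemytskii operator associated to the smooth $\Phi$, using the algebra embedding $W^{1,2}(\SS^1)\INTO C^0(\SS^1)$ in dimension one and, on level two, $W^{2,2}(\SS^1)\INTO C^1(\SS^1)$. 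The pointwise chain rule gives $d\phi|_u\xi=D\Phi(u(\cdot))\,\xi$ and $d^2\phi|_u(\xi,\eta)=D^2\Phi(u(\cdot))[\xi,\eta]$, so the candidate extensions are the \emph{multiplication operator} $D\phi|_u\colon\xi\mapsto D\Phi(u(\cdot))\,\xi$ and the pointwise bilinear expression $D^2\phi|_u(\xi,\eta):=D^2\Phi(u(\cdot))[\xi,\eta]$.

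For axioms (i$)_1$ and (i$)_2$ I would argue as follows. For $u\in U_1$ the embedding $W^{1,2}(\SS^1)\INTO C^0$ shows that $u(\SS^1)$ is a compact subset of $\Uu$, so $M:=D\Phi(u(\cdot))$ is bounded and $\xi\mapsto M\xi\in\Ll(H_0)$; since $\tfrac{d}{dt}M=D^2\Phi(u(\cdot))\dot u\in L^2$ we have $M\in W^{1,2}$, a Banach algebra on $\SS^1$, giving $D\phi|_u\in\Ll(H_1)$. Boundedness on $H_{-1}$ follows by duality: the $H_0$-transpose of multiplication by $M$ is multiplication by $M^{T}\in\Ll(H_1)$, whose adjoint lies in $\Ll(H_{-1})$; for $u\in U_2$ one has the stronger $M\in W^{2,2}$, which yields the level-two continuity. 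The $C^1$ dependence on $u$ is obtained by differentiating under the multiplication: $dD\phi|_u(\xi,\cdot)$ is multiplication by $D^2\Phi(u(\cdot))[\xi(\cdot),\cdot]$, bounded on $H_0$ for $\xi\in H_1\INTO C^0$ and bounded on $H_{-1}$ for $\xi\in H_2$, with continuity in $u$ inherited from continuity of $u\mapsto D^k\Phi(u(\cdot))$ in the relevant multiplier norms.

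The crux is axioms (ii$)_1$ and (ii$)_2$, and this is exactly where the hypothesis $s>\tfrac12$ enters. For (ii$)_1$ I would estimate, for $\xi\in H_s$ and $\eta\in H_0$,
$$
   \norm{D^2\Phi(u(\cdot))[\xi,\eta]}_0
   \le \norm{D^2\Phi(u(\cdot))}_{\infty}\,\norm{\xi}_{\infty}\,\norm{\eta}_0
   \le C\,\norm{\xi}_s\,\norm{\eta}_0,
$$
where the last inequality is the \emph{sharp} Sobolev embedding $W^{s,2}(\SS^1)\INTO L^\infty$, valid precisely for $s>\tfrac12$; this gives $D^2\phi|_u\in\Ll(H_s,H_0;H_0)$. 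For (ii$)_2$, with $u\in U_2$, I would note $D^2\Phi(u(\cdot))\in W^{2,2}\subset W^{1+s,2}$; since $W^{1+s,2}(\SS^1)$ is a Banach algebra (as $1+s>\tfrac12$) the product $D^2\Phi(u(\cdot))[\xi,\cdot]$ lies in $W^{1+s,2}$ for $\xi\in H_{1+s}$, and multiplication by a $W^{1+s,2}$-function is bounded on $H_{-1}$ by duality from its boundedness on $H_1$ (product rule, using $W^{1+s,2}\INTO C^0$ and its derivative in $W^{s,2}\INTO L^\infty$). This yields $D^2\phi|_u\in\Ll(H_{1+s},H_{-1};H_{-1})$ with norm controlled by $\norm{\xi}_{1+s}$, and continuity of $u\mapsto D^2\phi|_u$ in both norms again follows from continuity of $u\mapsto D^2\Phi(u(\cdot))$ in the pertinent multiplier topologies.

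Finally, since $\Phi$ is a diffeomorphism, $\Phi^{-1}\colon\Vv\to\Uu$ is again smooth and $\phi^{-1}\colon v\mapsto\Phi^{-1}\circ v$ has exactly the same form; running the argument above for $\Phi^{-1}$ shows $\phi^{-1}$ is an $s$-Floer map, so by Definition~\ref{def:Floeromorphism} the map $\phi$ is an $s$-Floeromorphism for every $s\in(\tfrac12,1)$. The main obstacle throughout is the single sharp estimate in (ii$)_1$: boundedness of pointwise multiplication $H_s\times H_0\to H_0$ forces one factor to lie in $L^\infty$, which on the circle is available only above the critical exponent $s=\tfrac12$, and this is precisely what pins down the admissible range of $s$.
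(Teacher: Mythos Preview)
Your proposal is correct and follows essentially the same route as the paper: compute $d\phi$ and $d^2\phi$ pointwise as multiplication by $D\Phi(u(\cdot))$ and $D^2\Phi(u(\cdot))$, then verify the four axioms via Sobolev multiplication properties, with the Sobolev embedding $H_s(\SS^1)\INTO C^0$ for $s>\tfrac12$ being the sharp input for (ii$)_1$, and duality handling the $H_{-1}$ levels.

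Two minor differences are worth noting. For (i$)_2$ the paper packages your duality argument into a separate lemma stating that multiplication $W^{1,2}\times W^{-1,2}\to W^{-1,2}$ is continuous, and observes that the extension $D\phi|_u\in\Ll(H_{-1})$ and its $C^1$ dependence already hold for $u\in U_1$ (since $D\Phi(u(\cdot))$ and $D^2\Phi(u(\cdot))\xi$ lie in $W^{1,2}$ once $u,\xi\in W^{1,2}$); you invoke the stronger $M\in W^{2,2}$ for $u\in U_2$, which is not needed but harmless. For (ii$)_2$ the paper again stays at integer levels: it shows $D^2\phi|_u\in\Ll(H_1,H_{-1};H_{-1})$ for $u\in U_1$ via $W^{1,2}\cdot W^{1,2}\subset W^{1,2}$ followed by $W^{1,2}\cdot W^{-1,2}\subset W^{-1,2}$, and then simply composes with the inclusions $U_2\INTO U_1$ and $H_{1+s}\INTO H_1$. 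Your route through the Banach algebra $W^{1+s,2}$ and its action on $H_{-1}$ is valid but more elaborate than necessary; the paper's version avoids fractional-order multiplication estimates entirely at this step.
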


\begin{proof}
Fix $s\in(\frac12,1)$.
It suffices to show that $\phi$ is an $s$-Floer map:
Interchanging the roles of $\Uu$ and $\Vv$ and applying the result to
$\Phi^{-1}$ then shows that $\phi^{-1}$ is also an $s$-Floer map, so
that $\phi$ is an $s$-Floeromorphism.
The proof that $\phi$ is an $s$-Floer map takes 4 steps.

\smallskip\noindent
\textbf{Step 1.}
We show (i$)_1$.

\begin{proof}
The first derivative of the diffeomorphism $\phi$ at $u\in U_1$ in direction
$\xi=(\xi_1,\dots,\xi_n)\in H_1$ is given by the formula
\begin{equation}\label{eq:gjhbghj77}
   d\phi|_{u}\xi
   =d\Phi|_{u(\cdot)}\xi(\cdot)
   =\biggl(\sum_{j=1}^n \p_j\Phi_1|_{u(\cdot)} \xi_j(\cdot),\dots,
      \sum_{j=1}^n \p_j\Phi_n|_{u(\cdot)} \xi_j(\cdot)\biggr)
\end{equation}
at any time $t\in\SS^1$ and where $\p_j:=\frac{\p}{\p x_j}$.
The facts that $d\phi|_{u}\xi$ lies in $H_1$ and $\xi\mapsto
d\phi|_{u}\xi$ is linear and bounded follow since, firstly,
pre-composition of $W^{1,2}$-maps with smooth maps
takes values in $W^{1,2}$, more precisely
$$
   C^\infty(\R^n,\R)\times W^{1,2}(\SS^1,\R^n)\to
   W^{1,2}(\SS^1,\R),\quad (\Psi,u)\mapsto \Psi\circ u .
$$
and, secondly, multiplication is well-defined and continuous 
as a map
\begin{equation}\label{eq:12-12-12}
   W^{1,2}(\SS^1,\R)\times W^{1,2}(\SS^1,\R)\to W^{1,2}(\SS^1,\R)
   ,\quad (g,h)\mapsto gh .
\end{equation}
The latter relies on continuity of inclusion
$W^{1,2}(\SS^1,\R)\INTO C^0 (\SS^1,\R)$.
This shows that
$
   d\phi|_{u}\in\Ll(H_1)
$.
Moreover, since multiplication
\begin{equation}\label{eq:0-2-2}
   C^0 (\SS^1,\R)\times L^2 (\SS^1,\R)\to L^2 (\SS^1,\R),\quad (g,h)\mapsto gh
\end{equation}
thus, due to $W^{1,2}\INTO C^0$, multiplication
\begin{equation}\label{eq:12-2-2}
   W^{1,2}(\SS^1,\R)\times L^2 (\SS^1,\R)\to L^2 (\SS^1,\R),\quad (g,h)\mapsto gh
\end{equation}
is continuous,
the map $d\phi|_{u}$ has a unique extension to $\Ll(H_0)$, notation
$$
   D\phi|_u\in\Ll(H_0) .
$$
Here $D\phi|_u\xi$ is defined again by the right hand side of
equation~(\ref{eq:gjhbghj77}).
In particular for any $\xi\in H_1$ both maps coincide
$D\phi|_u\xi=d\phi|_u\xi$.
Similarly, the map
$$
   D\phi\colon U_1\to \Ll(H_0),\quad
   u\mapsto D\phi|_{u}
$$
is continuous.
Summarizing we have the picture
$$
   \underbrace{\p_j\Phi_i|_{u(t)}}_{W^{1,2}}
   \underbrace{\xi_j(t)}_{L^2}\in L^2.
$$

It remains to show that $D\phi\colon U_1\to \Ll(H_0)$ is continuously
differentiable. Given $u\in U_1$ and $\xi\in H_1$, using~(\ref{eq:gjhbghj77}) we
compute the derivative $dD\phi|_u\colon H_1\to\Ll(H_0)$ as follows
\begin{equation}\label{eq:2nd-deriv}
\begin{split}
   &\bigl(\left(dD\phi|_{u}\xi\right)\eta\bigr)(t)\\
   &=\biggl(\sum_{j,k=1}^n \underbrace{\overbrace{\p_k\p_j\Phi_1|_{u(t)}}^{W^{1,2}}
      \overbrace{\xi_j(t)}^{W^{1,2}}}_{W^{1,2}} \underbrace{\eta_k(t)}_{L^2},\dots,
      \sum_{j,k=1}^n \p_k\p_j\Phi_n|_{u(t)} \xi_j(t)\eta_k(t)\biggr) .
\end{split}
\end{equation}
Since multiplication of functions~(\ref{eq:12-12-12})
and~(\ref{eq:12-2-2}) are continuous maps
the derivative is a well defined map $H_1\to\Ll(H_0)$ and depends
continuously on $u\in U_1$.
This shows Step~1.
\end{proof}
\noindent
\textbf{Step 2.}
We show (i$)_2$.

\begin{proof}
By the same arguments as in Step~1,
but using the multiplication Lemma~\ref{le:-12_12_-12} instead,
we see that $D\phi$ already on $U_1$ extends to $\Ll(H_{-1})$, namely
\begin{equation}\label{eq:2nd-derivi}
   \underbrace{\p_j\Phi_i|_{u(t)}}_{W^{1,2}}
   \underbrace{\xi_j(t)}_{W^{-1,2}}\in W^{-1,2}
   ,\qquad
   \underbrace{\overbrace{\p_k\p_j\Phi_1|_{u(t)}}^{W^{1,2}}
   \overbrace{\xi_j(t)}^{W^{1,2}}}_{W^{1,2}}
   \underbrace{\eta_k(t)}_{W^{-1,2}} \in W^{-1,2}.
\end{equation}
Then a-fortiori $D\phi$
gives rise to a $C^1$ map
$D\phi\colon U_2\INTO U_1\to\Ll(H_{-1})$.
\end{proof}
\noindent
\textbf{Step 3.}
We show (ii$)_1$.

\begin{proof}
As we already computed in~(\ref{eq:2nd-deriv}) we have for $u\in U_1$
the formula
\begin{equation*}
\begin{split}
   &\left(d^2\phi|_{u}(\xi,\eta)\right)(t)\\
   &=\biggl(\sum_{j,k=1}^n \p_k\p_j\Phi_1|_{u(t)} \xi_j(t) \eta_k(t),\dots,
      \sum_{j,k=1}^n \p_k\p_j\Phi_n|_{u(t)} \xi_j(t)\eta_k(t)\biggr)
\end{split}
\end{equation*}
for all $\xi,\eta\in H_1$ and times $t\in\SS^1$.
It is a side remark that, differently from the first derivative,
while multiplication of two $W^{1,2}$ functions is still in $W^{1,2}$,
multiplication of two $L^2$ functions is only in $L^1$.

As a consequence of Proposition~\ref{prop:Taylor} the
bi-linear map $d^2\phi|_{u}\in \Ll(H_1,H_1;H_1)$
extends uniquely to a bi-linear map $D^2\phi|_{u}\in \Ll(H_s,H_0;H_0)$
whenever $s\in (\frac12, 1]$. To see this observe the inclusions
$$
   \underbrace{\underbrace{\p_k\p_j\Phi_1|_{u(\cdot)}}_{\in W^{1,2}\subset C^0}
   \underbrace{\xi_j(t)}_{\in H_s\subset C^0}}_{\in C^0}
   \underbrace{\eta_k(t)}_{\in H_0}
   \in H_0=L^2\;\;\text{by~(\ref{eq:0-2-2})}.
$$
Moreover, the map
$$
   D^2\phi\colon U_1\to \Ll(H_s,H_0;H_0),\quad
   u\mapsto D^2\phi|_{u}
$$
is continuous.
\end{proof}
\noindent
\textbf{Step 4.}
We show (ii$)_2$.

\begin{proof}
By the second equation in~(\ref{eq:2nd-derivi})
we see that $D^2\phi \colon U_1\to\Ll(H_s,H_0;H_0)$
after restriction extends to a continuous map
$D^2\phi \colon U_1\to\Ll(H_1,H_{-1};H_{-1})$.

A-fortiori, by continuous inclusions $U_2\INTO U_1$ and
$H_{1+s}\INTO H_1$
the derivative $D^2\phi$ becomes a continuous map
$U_2\to\Ll(H_{1+s},H_{-1};H_{-1})$.
This proves Step~4 and Theorem~\ref{thm:loop-Floerfold} follows.
\end{proof}
\end{proof}

\subsubsection*{Sobolev theory used in the proof}\label{sec:app-loop}

\begin{lemma}\label{le:-12_12_-12}
Let $W^{-1,2}(\SS^1)$ be the dual space of $W^{1,2}(\SS^1)$.
Then $W^{-1,2}(\SS^1)$ is preserved by $W^{1,2}$ multiplication,
more precisely, multiplication gives a map
\begin{equation}\label{eq:mult}
   \cdot\;\colon W^{-1,2}(\SS^1)\times W^{1,2}(\SS^1)\to W^{-1,2}(\SS^1)
   ,\quad
   (f^*,g)\mapsto f^*\cdot g
\end{equation}
and this map is continuous.
\end{lemma}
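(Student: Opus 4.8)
The plan is to read this lemma off as the dual statement of the $W^{1,2}$-multiplication estimate~(\ref{eq:12-12-12}). Since $W^{-1,2}(\SS^1)$ is by definition the dual of $W^{1,2}(\SS^1)$, an element $f^*\in W^{-1,2}(\SS^1)$ is nothing but a bounded linear functional on $W^{1,2}(\SS^1)$, so that $f^*(\varphi)$ is defined for every $\varphi\in W^{1,2}(\SS^1)$, with $\abs{f^*(\varphi)}\le\norm{f^*}_{-1}\norm{\varphi}_1$.

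First I would \emph{define} the product by duality: given $f^*\in W^{-1,2}(\SS^1)$ and $g\in W^{1,2}(\SS^1)$, set
$$
   (f^*\cdot g)(\varphi):=f^*(g\varphi),\qquad \varphi\in W^{1,2}(\SS^1).
$$
This is legitimate precisely because the product $g\varphi$ again lies in $W^{1,2}(\SS^1)$: by continuity of the multiplication map~(\ref{eq:12-12-12}) there is a constant $C>0$ with $\norm{g\varphi}_1\le C\norm{g}_1\norm{\varphi}_1$, which in turn rests on the one-dimensional Sobolev embedding $W^{1,2}(\SS^1)\INTO C^0(\SS^1)$.

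Next I would establish boundedness, which simultaneously proves $f^*\cdot g\in W^{-1,2}(\SS^1)$ and continuity of the bilinear map~(\ref{eq:mult}). Combining the defining bound for $f^*$ with the estimate above gives
$$
   \abs{(f^*\cdot g)(\varphi)}
   =\abs{f^*(g\varphi)}
   \le\norm{f^*}_{-1}\norm{g\varphi}_1
   \le C\norm{f^*}_{-1}\norm{g}_1\norm{\varphi}_1,
$$
so taking the supremum over $\norm{\varphi}_1\le 1$ yields $\norm{f^*\cdot g}_{-1}\le C\norm{f^*}_{-1}\norm{g}_1$. This is exactly the bilinear estimate required for~(\ref{eq:mult}).

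Finally, to justify the name \emph{multiplication}, I would check that the duality definition extends the pointwise product: if $f^*\in L^2\subset W^{-1,2}$, then $f^*(\psi)=\INNER{f^*}{\psi}_0$, and since $g\varphi\in W^{1,2}\subset L^2$ and $f^*g\in L^2$ (as $g\in C^0$) one has $(f^*\cdot g)(\varphi)=\INNER{f^*}{g\varphi}_0=\INNER{f^*g}{\varphi}_0$, so $f^*\cdot g$ is the functional induced by the ordinary product $f^*g$. By density of $L^2$ in $W^{-1,2}$ together with the continuity just proved, the duality formula is then the unique continuous extension of pointwise multiplication. I do not expect any genuine obstacle here: the only substantive input is~(\ref{eq:12-12-12}), and everything else is bookkeeping with the duality pairing.
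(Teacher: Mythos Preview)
Your proof is correct and follows essentially the same route as the paper: define the product by duality via $(f^*\cdot g)(h)=f^*(gh)$, then feed in the $W^{1,2}$-multiplication estimate~(\ref{eq:12-12-12}) to obtain the bilinear bound $\norm{f^*\cdot g}_{-1}\le C\norm{f^*}_{-1}\norm{g}_1$. Your final paragraph checking consistency with the pointwise product on $L^2$ is a nice addition that the paper does not include, but it is not needed for the lemma as stated.
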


Observe that $f^*\cdot g\colon  W^{1,2}(\SS^1)\to\R$ is a linear
functional and evaluation is given by $(f^*\cdot g)h=f^*(gh)$
for every $h\in W^{1,2}(\SS^1)$.

\begin{proof}
Pick $h\in W^{1,2}(\SS^1)$, then $(f^*\cdot g)(h)=f^*(gh)$.
Since $W^{1,2}$ is closed under multiplication,
the product $gh$ lies in $W^{1,2}$ and therefore $f^*\in (W^{1,2})^*$
and $f^*(gh)\in\R$. Therefore $f^*\cdot g$ is a linear map $W^{1,2}\to\R$.
By continuity of the multiplication map~(\ref{eq:12-12-12})
there is a constant $c$ such that the next estimate holds
$$
   \abs{f^*(gh)}\le \norm{f^*}_{-1,2} \norm{gh}_{1,2}
   \le c \norm{f^*}_{-1,2} \norm{g}_{1,2}\norm{h}_{1,2} .
$$
This shows that $f^*g$ is continuous as a map $W^{1,2}\to\R$.
In particular $f^*g \in W^{-1,2}=(W^{1,2})^*$.
Moreover, we have the estimate
$$
   \norm{f^*g}_{-1,2}
   \le c \norm{f^*}_{-1,2}\norm{g}_{1,2} ,
$$
Hence the map~(\ref{eq:mult}) is continuous.
\end{proof}

\begin{proposition}\label{prop:Taylor}
The inclusion map
$H_s(\R^m)\subset C^\alpha(\R^m)$ is continuous
whenever $s=\frac{m}{2}+\alpha$ and $\alpha\in(0,1)$.
Here
$$
   u\in C^\alpha(\R^m)\quad\Leftrightarrow\quad
   \text{$u$ bounded and $\exists C\colon
   \abs{u(x+y)-u(x)}\le C\abs{y}^\alpha$ $\forall x,y$}.
$$
\end{proposition}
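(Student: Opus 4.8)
The plan is to prove this as a standard Sobolev embedding, using the Fourier-analytic description of the fractional space $H_s$. Recall that $u\in H_s(\R^m)$ precisely when $\norm{u}_{H_s}^2=\int_{\R^m}(1+\abs{\xi}^2)^s\abs{\widehat u(\xi)}^2\,d\xi$ is finite, where $\widehat u$ is the Fourier transform. First I would establish the sup-norm bound. Writing $u$ by Fourier inversion and inserting the factor $(1+\abs{\xi}^2)^{s/2}(1+\abs{\xi}^2)^{-s/2}$ under the integral, the Cauchy-Schwarz inequality gives $\abs{u(x)}\le C\norm{u}_{H_s}\bigl(\int_{\R^m}(1+\abs{\xi}^2)^{-s}\,d\xi\bigr)^{1/2}$. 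The weight integral converges exactly when $2s>m$, and since $s=\tfrac m2+\alpha$ with $\alpha>0$ this holds; in particular $\widehat u\in L^1$, so $u$ has a bounded continuous representative.

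For the Hölder seminorm I would again use Fourier inversion to write $u(x+y)-u(x)=(2\pi)^{-m/2}\int \widehat u(\xi)\,e^{ix\cdot\xi}(e^{iy\cdot\xi}-1)\,d\xi$, and then Cauchy-Schwarz in the same split form to reduce everything to the single estimate
$$
   I(y):=\int_{\R^m}(1+\abs{\xi}^2)^{-s}\abs{e^{iy\cdot\xi}-1}^2\,d\xi\le C\abs{y}^{2\alpha}.
$$
Granting this, one obtains $\abs{u(x+y)-u(x)}\le C\norm{u}_{H_s}\abs{y}^\alpha$, which together with the sup-norm bound gives continuity of the inclusion $H_s(\R^m)\INTO C^\alpha(\R^m)$.

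The heart of the argument is the estimate for $I(y)$, which I would obtain by the standard frequency splitting at the scale $\abs{\xi}=1/\abs{y}$ combined with the elementary pointwise bound $\abs{e^{iy\cdot\xi}-1}^2\le\min(4,\abs{y}^2\abs{\xi}^2)$. On the high-frequency region $\abs{\xi}>1/\abs{y}$ one uses the bound $4$ and passes to polar coordinates; since $m-1-2s=-1-2\alpha$, the radial integral $\int_{1/\abs{y}}^\infty r^{-1-2\alpha}\,dr$ is exactly of order $\abs{y}^{2\alpha}$. On the low-frequency region one uses $\abs{y}^2\abs{\xi}^2$, where the radial exponent is $m+1-2s=1-2\alpha$, contributing $\abs{y}^2\int_0^{1/\abs{y}}r^{1-2\alpha}\,dr$. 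I expect the main obstacle to be the bookkeeping in this low-frequency term, whose growth splits into the three cases $\alpha<\tfrac12$, $\alpha=\tfrac12$, $\alpha>\tfrac12$; in each case the outcome is $O(\abs{y}^{2\alpha})$ for $\abs{y}$ small, as a short computation of the three radial integrals shows. For $\abs{y}\ge1$ one simply bounds $I(y)$ by the convergent integral $4\int_{\R^m}(1+\abs{\xi}^2)^{-s}\,d\xi$. The genuinely delicate point is confirming that the \emph{critical} exponent $s=\tfrac m2+\alpha$, an equality rather than a strict inequality, already suffices for Hölder continuity — and this works precisely because $\alpha>0$ forces every borderline integral above to converge with the correct power of $\abs{y}$.
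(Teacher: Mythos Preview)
Your argument is correct and is exactly the standard Fourier-analytic proof of the Sobolev--H\"older embedding at the critical exponent. The paper, however, does not give a proof at all: its entire proof of Proposition~\ref{prop:Taylor} is a reference to Taylor's textbook \cite[Ch.\,4 Prop.\,1.5]{Taylor:1996a}. So there is nothing to compare strategically---you have simply supplied the argument that the paper outsources, and the proof you sketch is essentially the one found in that reference (Cauchy--Schwarz against the weight $(1+\abs{\xi}^2)^{-s}$, then the dyadic splitting of the integral $I(y)$ at the scale $\abs{\xi}\sim 1/\abs{y}$).

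One minor bookkeeping remark: in the low-frequency piece you write the radial integrand as $r^{1-2\alpha}$, which is the large-$r$ asymptotic of $r^{m+1}(1+r^2)^{-s}$ rather than its exact form; near $r=0$ the actual integrand is $\sim r^{m+1}$ and is harmless. If you want the displayed integral $\int_0^{1/\abs{y}} r^{1-2\alpha}\,dr$ to be literally correct, split once more at $r=1$ (or simply bound $(1+r^2)^{-s}\le 1$ on $[0,1]$ and $(1+r^2)^{-s}\le r^{-2s}$ on $[1,1/\abs{y}]$). This does not affect the outcome.
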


\begin{proof}
See e.g.~\cite[Ch.\,4 Prop.\,1.5]{Taylor:1996a}.
\end{proof}

\bibliographystyle{alpha}
\addcontentsline{toc}{section}{References}
\bibliography{$HOME/Dropbox/0-Libraries+app-data/Bibdesk-BibFiles/library_math,$HOME/Dropbox/0-Libraries+app-data/Bibdesk-BibFiles/library_math_2020,$HOME/Dropbox/0-Libraries+app-data/Bibdesk-BibFiles/library_physics}{}

\begin{thebibliography}{HWZ21}

\bibitem[BL76]{Bergh:1976a}
J\"{o}ran Bergh and J\"{o}rgen L\"{o}fstr\"{o}m.
\newblock {\em Interpolation spaces. {A}n introduction}, volume No. 223 of {\em
  Grundlehren der Mathematischen Wissenschaften}.
\newblock Springer-Verlag, Berlin-New York, 1976.

\bibitem[Flo88]{floer:1988c}
Andreas Floer.
\newblock The unregularized gradient flow of the symplectic action.
\newblock {\em Comm. Pure Appl. Math.}, 41(6):775--813, 1988.

\bibitem[Flo89]{floer:1989c}
Andreas Floer.
\newblock Witten's complex and infinite-dimensional {M}orse theory.
\newblock {\em J. Differential Geom.}, 30(1):207--221, 1989.

\bibitem[FW24]{Frauenfelder:2024d}
Urs {Frauenfelder} and Joa {Weber}.
\newblock {On the spectral flow theorem of Robbin-Salamon for finite
  intervals}.
\newblock {\em \href{https://vixra.org/author/joa_weber}{viXra e-prints}
  {\small science, freedom, dignity}}, pages 1--79, Dezember 2024.
\newblock \href{https://vixra.org/abs/2412.0122}{viXra:2412.0122}.

\bibitem[HWZ21]{Hofer:2021a}
Helmut Hofer, Krzysztof Wysocki, and Eduard Zehnder.
\newblock {\em Polyfold and {F}redholm theory}, volume~72 of {\em Ergebnisse
  der Mathematik und ihrer Grenzgebiete. 3. Folge. A Series of Modern Surveys
  in Mathematics}.
\newblock Springer, Cham, 2021.
\newblock \href{https://arxiv.org/abs/1707.08941}{Preliminary version on
  arXiv:1707.08941}.

\bibitem[Tay96]{Taylor:1996a}
Michael~E. Taylor.
\newblock {\em Partial differential equations. Basic theory.}, volume~23 of
  {\em Texts in Applied Mathematics}.
\newblock Springer-Verlag, New York, 1996.

\end{thebibliography}

%


\end{document}